\documentclass[12pt]{article}

\bibliographystyle{alpha}

\usepackage{amsfonts}
\usepackage{amsmath}
\usepackage{amssymb}
\usepackage{amsthm}
\usepackage{amscd}

\textheight230mm \voffset-0mm \textwidth160mm \hoffset-10mm
\setlength{\topmargin}{0mm} \footskip10mm

\theoremstyle{plain} 
\newtheorem{theo}{Theorem}[section]
\newtheorem{prop}[theo]{Proposition}
\newtheorem{lem}[theo]{Lemma}

\theoremstyle{definition}

\newtheorem{rk}[theo]{Remark}

\newtheorem{exa}[theo]{Example}

\numberwithin{equation}{section}

\newcommand{\from}{\colon}
\newcommand{\N}{{\mathbb N}}
\newcommand{\Z}{{\mathbb Z}}
\newcommand{\R}{{\mathbb R}}

\newcommand{\calE}{{\cal{E}}}

\newcommand{\Om}{\Omega}
\newcommand{\cH}{\check{H}}

\newcommand{\Kn}{K^{n}}
\newcommand{\calEn}{{\cal{E}}^{n}}
\newcommand{\ccalE}{\check{\cal{E}}}

\newcommand{\calD}{{\cal{D}}}
\newcommand{\calH}{{\cal{H}}}
\newcommand{\calHaux}{{\cal{H}}_{\rm aux}}
\newcommand{\Har}{{\cal{H}}_{\rm har}^J}
\newcommand{\Jn}{J^{n}_1}
\newcommand{\form}{a}
\DeclareMathOperator{\ran}{ran}
\DeclareMathOperator{\dom}{dom}
\newcommand{\reg}{\mathrm{reg}}
\DeclareMathOperator{\supp}{supp}
\newcommand{\aux}{\mathrm{aux}}
\newcommand{\e}{\mathrm{e}}
\newcommand{\Dir}{\mathrm{D}}
\newcommand{\har}{\mathrm{har}}
\newcommand{\E}{\mathbb{E}}

\begin{document}
\title{On the construction and convergence of traces of forms}
\author{\normalsize
Hichem BelHadjAli\footnote{Department of Mathematics, I.P.E.I.N. Uni. Carthage, Tunisia. E-mail: hichem.belhadjali@ipein.rnu.tn},
Ali BenAmor\footnote{Institute of transport and logistics. Uni. Sousse, Tunisia. E-mail: ali.benamor@ipeit.rnu.tn},
Christian Seifert\footnote{TU Hamburg, Institut f\"ur Mathematik, Am Schwarzenberg-Campus 3 E, 21073 Hamburg, Germany. E-mail: christian.seifert@tuhh.de},
Amina Thabet\footnote{Department of Mathematics, Faculty of Sciences of Gab\`es. Uni. Gab\`es, Tunisia. E-Mail: maino.88@live.com}
}

\date{\today}

\maketitle

\begin{quotation}
\begin{center}
\textit{Dedicated to the memory of Johannes F.\ Brasche.}
\end{center}
\end{quotation}

\begin{abstract}
  We elaborate a new method for constructing traces of quadratic forms in the framework of Hilbert and Dirichlet spaces.
  Our method relies on monotone convergence of quadratic forms and the canonical decomposition into regular and singular part.
  We give various situations where the trace can be described more explicitly and compute it for some illustrating examples.
  We then show that Mosco convergence of Dirichlet forms implies Mosco convergence of a subsequence of their approximating traces.
\end{abstract}

\textbf{MSC 2010:} 47A07, 46C05, 46C07, 47B25, 46E30.

\textbf{Keywords:} trace of forms, Dirichlet forms, Mosco convergence

\section{Introduction}

In this paper we study the construction of traces of closed positive quadratic forms $\calE$ in Hilbert spaces with respect to some given linear operator $J$.
By this we mean, starting with a closed positive quadratic form with domain in some Hilbert space $\calH$ and a linear operator $J$ with domain in the same space $\calH$ but having values in some other auxiliary Hilbert space $\calHaux$, we shall construct a new closed quadratic form in $\calHaux$. Let us stress that the mentioned problem is not new and there are various methods for constructing such a form in the literature, see \cite{Arendt1,ArendtterElstKennedySauter2014,vogt,Fukushima}. The most general construction can be found in  \cite{Arendt1}, where the authors construct an operator in $\calHaux$ starting from $\calE$ and $J$ and then of course the form. The novelty in our method consists in following the converse strategy as follows: starting with a form in $\calH$ we construct the so-called trace form in $\calHaux$ and its associated operator simultaneously, by means of approximating forms. However, we will show in Theorem \ref{thm:constructions_agree} that both constructions lead in fact to the same object. Besides we shall also focus on explicit computation of the obtained form.

Let us explain our method. Instead of using Kato--Lions method for forms we make use of monotone convergence of quadratic forms together with their canonical decomposition into a regular part and a singular one, see \cite{Simon}. This method of construction seems not to exist in the literature.
The main input at this stage is a Dirichlet principle consisting in describing the approximating forms in a variational way.
Thanks to this method we are able to compute explicitly traces of forms in many general circumstances.

In the special case of Dirichlet spaces we show, with a short and analytic proof, that traces of regular Dirichlet forms are regular Dirichlet forms as well.
We also show that Mosco-convergence of Dirichlet forms yields Mosco-convergence of a subsequence of approximating trace forms.
We refer to \cite{Mosco1994} for the corresponding notion (which will be recalled in Section \ref{sec:convergence_Dirichlet_forms} below as well).
At this stage we shall make use of the theory of convergence of sequences of Hilbert spaces and its corollaries elaborated in \cite{Kuwae}.

The concept of traces of forms goes back to Fukushima-Oshima-Takeda \cite[Section 6.2]{FOT1994,Fukushima}, where the authors initiate the construction, investigate the trace form and relate it to part of processes.
However, many proofs, especially in the non-transient case, are based on arguments making use of the theory of stochastic processes. We aim for analytic arguments.
Recently the subject gained much more interest due to a generalization of the form method by Arendt and ter Elst \cite{Arendt1}.
Since then there has been various studies of properties of traces of sectorial forms in Hilbert spaces.
In \cite{ArendtterElstKennedySauter2014} the authors rely their construction on a hidden compactness condition yielding ellipticity for the form.
In \cite{BBBMath} the construction of the trace of $\calE_1$, the form $\calE$ shifted by $1$, is given.
We will make use of the traces of $\calE_\lambda$ for all $\lambda>0$ given in this way and then take the appropriate limit for $\lambda\to 0$.
Ter Elst, Sauter and Vogt in \cite{vogt} proved a generation theorem for accretive forms under the assumption that $J$ is bounded with dense range, which extends the results of \cite{Arendt1}.
In \cite{Post2016}, Post used so-called boundary pairs (referring to the case that $J$ has a dense kernel) to construct a family of operators related to the associated operator to the trace form.
Moreover, there are applications in the context of Dirichlet forms and singular diffusions, see \cite{SeifertVoigt2011,FreibergSeifert2015}.

Traces of quadratic forms have a wide range of applications in a variety of fields.
Let us cite, among others, their connection to parts of stochastic processes established in \cite{Fukushima1980}, their relationship to the construction of Dirichlet-to-Neumann operators \cite{Arendt2,Daners} and of fractional powers of the Laplacian \cite{MolchanovOstrocskii1969,Caffarelli}.
Traces of forms also appear in the study of problems related to large coupling convergence and spectral asymptotics \cite{BBBMath, BBBT}.

The paper is organized as follows. In Section \ref{sec:Traces_quadratic_forms} we introduce the setup for quadratic forms in Hilbert spaces, prove a Dirichlet principle for the approximating forms and construct the trace via monotone convergence and regular parts.
We then focus on special situations, where we can compute the trace more explicitly.
In Section \ref{sec:examples} we apply our method to various examples and calculate the corresponding traces.
This includes the square root of the Laplacian as obtained in \cite{Caffarelli} revisited in the context of forms, but also traces on (maybe small) subsets, wich can correspond to singular diffusions; cf.\ \cite{SeifertVoigt2011, FreibergSeifert2015}.
Starting from Section \ref{sec:Traces_Dirichlet_forms} we focus on Dirichlet forms.
First, we show that the trace of a regular Dirichlet form is a regular Dirichlet form again (when interpreted in the right space).
We also relate our method of construction with the probabilistic one in \cite[Section 6.2]{Fukushima}, and show that these two traces coincide.
The final Section \ref{sec:convergence_Dirichlet_forms} is devoted to properties of sequences of Dirichlet forms.
Here we prove that Mosco convergence implies Mosco convergence of a subsequence of approximating trace forms.

\section{Traces of quadratic forms in Hilbert spaces}
\label{sec:Traces_quadratic_forms}

Let $\calH, \calHaux$ be two Hilbert spaces.
Let $(\cdot,\cdot)$ and $(\cdot,\cdot)_{\aux}$ denote the scalar products on $\calH$ and $\calHaux$, respectively.
Let $\calE$ be a closed positive quadratic form with domain $\calD\subseteq \calH$.
For $u\in\calD$ we abbreviate $\calE[u]:=\calE(u,u)$ and for every $\lambda>0$ set
\[
  \calE_\lambda[u] := \calE[u] + \lambda\|u\|^2.
\]
Assume we are given a linear operator $J\from \dom J\subseteq\calD\to \calHaux$ with dense range
such that $J$ is closed in $(\calD,\calE_1^{1/2})$.
For $\lambda>0$ we define $J_\lambda\from \dom J\subseteq (\calD,\calE_\lambda^{1/2})\to \calHaux$ by $J_\lambda u :=Ju$.
Let $(\ker J_\lambda)^{\perp_{\calE_\lambda}}$ be the $\calE_\lambda$-orthogonal complement of $\ker J_\lambda$ and let $P_\lambda$ the $\calE_\lambda$-orthogonal projection onto $(\ker J_\lambda)^{\perp_{\calE_{\lambda}}}$.

For $\lambda>0$ we construct a new family of closed positive densely defined quadratic forms as follows (see \cite[Theorem 1.1]{BBBMath})
\begin{equation}
  \dom \check{\calE}_\lambda := \ran J,\quad \check{\calE}_\lambda[Ju]:= {\calE_\lambda}[P_\lambda u] \quad\text{for all } u\in \dom J.
  \label{construction1}
\end{equation}

Let $\check{H}_\lambda$ be the positive self-adjoint operator associated with $\check{\calE}_\lambda$. We emphasize that, if moreover $J$ is densely defined then from \cite[Theorem 1.1]{BBBMath} once again we obtain
\begin{equation}
  \check{H}_\lambda = (J_\lambda J_\lambda^*)^{-1}.
  \label{checkH}
\end{equation}

We start with a result that is of major importance for our construction of traces of quadratic forms and which expresses the variational aspect of the forms $\check{\calE}_\lambda$.

\begin{theo}[Dirichlet principle]
\label{thm:Dirichlet_principle}
  Let $\lambda>0$, $u\in \dom J$. Then
  \[
  \check{\calE}_\lambda[Ju] = \inf\{{\calE_\lambda}[v]:\; v\in \dom J,\ Jv=Ju\}.
  \]
  Moreover, $\check{\calE}_\lambda \leq \check{\calE}_\mu$ for $\lambda\leq \mu$.
\end{theo}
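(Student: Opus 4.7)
The plan is to exploit the $\calE_\lambda$-orthogonal decomposition built into the definition of $P_\lambda$. The key observation is that $P_\lambda u$ serves two roles simultaneously: it is itself an admissible competitor in the infimum, and it is the $\calE_\lambda$-orthogonal projection, inside $(\dom J,\calE_\lambda^{1/2})$, along $\ker J_\lambda$. Combining these with Pythagoras gives the variational formula essentially for free.

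First I would check admissibility: since $u - P_\lambda u \in \ker J_\lambda = \ker J$, we have $J P_\lambda u = Ju$, and $P_\lambda u \in \dom J$. So $P_\lambda u$ lies in the set $\{v \in \dom J : Jv = Ju\}$ and thus
\[
\inf\{\calE_\lambda[v] : v \in \dom J,\ Jv = Ju\} \leq \calE_\lambda[P_\lambda u] = \check{\calE}_\lambda[Ju].
\]
For the reverse inequality, take an arbitrary competitor $v\in\dom J$ with $Jv=Ju$ and write $v = P_\lambda u + (v - P_\lambda u)$. Since $J(v-P_\lambda u) = Jv - JP_\lambda u = 0$, the difference lies in $\ker J_\lambda$, which is $\calE_\lambda$-orthogonal to $P_\lambda u \in (\ker J_\lambda)^{\perp_{\calE_\lambda}}$. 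Pythagoras in the Hilbert space $(\dom J,\calE_\lambda^{1/2})$ then yields
\[
\calE_\lambda[v] = \calE_\lambda[P_\lambda u] + \calE_\lambda[v - P_\lambda u] \geq \calE_\lambda[P_\lambda u] = \check{\calE}_\lambda[Ju],
\]
and taking the infimum over $v$ closes the identity.

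For the monotonicity, I would simply apply the variational formula to both $\lambda\leq \mu$ on the common feasible set $\{v \in \dom J : Jv = Ju\}$ and use the pointwise inequality $\calE_\lambda[v] = \calE[v] + \lambda\|v\|^2 \leq \calE[v] + \mu\|v\|^2 = \calE_\mu[v]$; taking infima preserves this inequality, so $\check{\calE}_\lambda[Ju] \leq \check{\calE}_\mu[Ju]$ for every $Ju \in \ran J$.

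I do not expect any serious obstacle. The only point that needs a moment of care is the legitimacy of the orthogonal splitting $v = P_\lambda u + (v-P_\lambda u)$, i.e.\ that $\ker J_\lambda$ is closed in $(\dom J,\calE_\lambda^{1/2})$ so that $P_\lambda$ is a bona fide $\calE_\lambda$-orthogonal projection; this is exactly the content of the hypothesis that $J$ is closed in $(\calD,\calE_1^{1/2})$, since the norms $\calE_\lambda^{1/2}$ are all equivalent on $\dom J$ for $\lambda>0$.
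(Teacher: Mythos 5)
Your argument is correct and follows the same route as the paper: both hinge on $P_\lambda u$ being an admissible competitor (via Lemma~\ref{lem:projection}) and on the $\calE_\lambda$-orthogonality of $\ker J_\lambda$ and $(\ker J_\lambda)^{\perp_{\calE_\lambda}}$, with your explicit Pythagoras step and the paper's inequality $\calE_\lambda[v]\geq\calE_\lambda[P_\lambda v]=\calE_\lambda[P_\lambda u]$ being two phrasings of the same projection estimate. One tiny caveat: $P_\lambda$ and the orthogonal splitting live in the Hilbert space $(\calD,\calE_\lambda^{1/2})$, not $(\dom J,\calE_\lambda^{1/2})$ (the latter need not be complete, since closedness of $J$ does not mean $\dom J$ is closed in $\calD$), but this does not affect the computation since all the relevant vectors lie in $\dom J$ and the inner product is the ambient one.
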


In the proof of the Dirichlet principle we will make use of the following lemma.

\begin{lem}
\label{lem:projection}
  Let $(\calD,q)$ be a Hilbert space, $J\from \dom J\subseteq \calD \to \calHaux$ linear and closed. Let $P$ be the $q$-orthogonal projection onto $(\ker J)^{\bot_q}$. Let $u\in \dom J$. Then $Pu\in \dom J$ and $Ju = JPu$.
\end{lem}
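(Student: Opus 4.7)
The plan is to exploit the closedness of $J$ in $(\calD,q)$ to deduce that $\ker J$ is a $q$-closed subspace, and then to combine this with the $q$-orthogonal decomposition $u = Pu + (I-P)u$.

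First, I would verify that $\ker J$ is closed in $(\calD,q)$. Indeed, if $(u_n)$ is a sequence in $\ker J$ converging to some $u\in\calD$ in the $q$-norm, then $Ju_n = 0 \to 0$ in $\calHaux$, so the closedness of $J$ yields $u\in\dom J$ and $Ju = 0$; that is, $u \in \ker J$. Once $\ker J$ is known to be $q$-closed, the orthogonal decomposition $\calD = \ker J \oplus (\ker J)^{\perp_q}$ is valid, and $I-P$ is precisely the $q$-orthogonal projection onto $\ker J$. In particular, $I-P$ maps $\calD$ into $\ker J$ itself, not merely into its closure.

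For the main statement, let $u \in \dom J$. Write $u = Pu + (I-P)u$; by the previous step $(I-P)u \in \ker J \subseteq \dom J$. Since $u \in \dom J$ and $\dom J$ is a linear subspace (as $J$ is linear), it follows that $Pu = u - (I-P)u \in \dom J$. Applying $J$ then gives $JPu = Ju - J(I-P)u = Ju$, as required.

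The only point that needs any real attention is the use of closedness of $J$ to ensure that $\ker J$ is $q$-closed; this is exactly what guarantees that $I-P$ takes values in $\ker J$ itself, so that $J$ can legitimately be evaluated on the difference and annihilates it. Everything else is linear algebra once the $q$-orthogonal decomposition is in place.
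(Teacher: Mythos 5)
Your proof is correct and follows essentially the same route as the paper: show $\ker J$ is $q$-closed via closedness of $J$, observe that $(I-P)u = u - Pu$ lies in $\ker J \subseteq \dom J$, and then use linearity of $\dom J$ to conclude $Pu \in \dom J$ with $JPu = Ju$. The only cosmetic difference is that you spell out the argument for closedness of $\ker J$, whereas the paper simply cites the standard fact that closed operators have closed kernels.
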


\begin{proof}
  Note that closed linear operators have closed kernels. Hence, $\ker J$ is closed.
  Since $P$ is an $q$-orthogonal projection, we obtain
  \[u-Pu \in (\ran P)^{\bot_q} = \bigl((\ker J)^{\bot_q}\bigr)^{\bot_q} = \overline{\ker J} = \ker J\subseteq \dom J.\]
  Since $u\in\dom J$, we obtain $Pu\in\dom J$ and $Ju - JPu = J(u-Pu) = 0$.
\end{proof}

\begin{proof}[Proof of Theorem \ref{thm:Dirichlet_principle}]
  By Lemma \ref{lem:projection} we have $P_\lambda u\in \dom J$, and $Ju=JP_\lambda u$.
  Thus,
  \[
    \inf\{\calE_\lambda[v]:\; v\in \dom J,\ Jv=Ju\}\leq {\calE_\lambda}[P_\lambda u]=\check{\calE}_\lambda[Ju].
  \]
  On the other hand, owing to the fact that $P_\lambda$ is an orthogonal projection w.r.t.\ $\calE_\lambda$ we get
  \[
    \calE_\lambda[v]\geq \calE_\lambda[P_\lambda v] \quad \text{for all } v\in \dom J.
  \]
  Now if $v\in \dom J$ and $Jv=Ju$ then we obtain $P_\lambda v = P_\lambda u$ and therefore
  \[
  \inf\{\calE_\lambda[v]:\; v\in \dom J,\ Jv=Ju\}\geq \calE_\lambda[P_\lambda u]= \check{\calE}_\lambda[Ju].
  \]
  Since $(\calE_\lambda)_{\lambda>0}$ is a monotone increasing family, also $(\check{\calE}_\lambda)_\lambda$ is monotone increasing.
\end{proof}

\begin{rk}
  Let $Q$ be a densely defined positive quadratic form on a Hilbert space $\calH$.
  Then $Q$ can be uniquely decomposed into $Q=Q_\reg + Q_{\rm sing}$, such that
  $Q_\reg$ is the largest positive densely defined closable quadratic form dominated by $Q$.
  In particular, if $Q$ is closable then $Q_\reg=Q$. The form $Q_\reg$ is called the \emph{regular part} of $Q$.
  See \cite{Simon,Mosco1994} for more details on this decomposition.
\end{rk}

\begin{theo}
  There exists a positive self-adjoint operator $\check{H}$ in $\calHaux$ such that
  \[
    \lim_{\lambda\downarrow 0}(\check {H}_\lambda +1)^{-1}=(\check H + 1)^{-1} \;\text{strongly}.
  \]
  Furthermore, defining $\check{\calE}_0$ in $\calHaux$ by
  \[
    \dom \check{\calE}_0 := \ran J,\quad \check{\calE}_0[Ju]:= \lim_{\lambda\downarrow 0}\check{\calE}_\lambda[Ju] \quad\text{for all } u\in \dom J,
  \]
  then $\check H$ is the self-adjoint operator associated with the closure of $(\check{\calE}_0)_\reg$.
  In particular, if $\check{\calE}_0$ is closable then $\check H$ is the self-adjoint operator associated with the closure of $\check{\calE}_0$.
  \label{construction}
\end{theo}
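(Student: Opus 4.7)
The plan is to recognize this statement as a direct application of Simon's monotone convergence theorem for decreasing sequences of closed positive quadratic forms (see \cite{Simon}), combined with a short monotonicity argument to promote sequential convergence to convergence of the full net $\lambda\downarrow 0$.

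First, by Theorem~\ref{thm:Dirichlet_principle} the family $(\check{\calE}_\lambda)_{\lambda>0}$ is monotone decreasing as $\lambda\downarrow 0$, each $\check{\calE}_\lambda$ being a closed positive quadratic form on $\calHaux$ with domain $\ran J$, which is dense in $\calHaux$ by the standing assumption on $J$. I would fix any sequence $\lambda_n\downarrow 0$; then $(\check{\calE}_{\lambda_n})_n$ is a monotone decreasing sequence of closed, densely defined, positive quadratic forms on $\calHaux$, and Simon's theorem asserts that the pointwise limit form $q_\infty[Ju]:=\lim_n\check{\calE}_{\lambda_n}[Ju]$ on $\ran J$ admits a regular part $(q_\infty)_\reg$ which is closable, and that the self-adjoint operators $\check{H}_{\lambda_n}$ converge in strong resolvent sense to the self-adjoint operator $\check{H}$ associated with $\overline{(q_\infty)_\reg}$. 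Monotonicity of the family in $\lambda$ guarantees that $q_\infty$ is independent of the chosen sequence and coincides with $\check{\calE}_0$ as defined in the statement (note that since each $\check{\calE}_\lambda[Ju]$ is nonnegative and decreasing in $\lambda$, the pointwise limit exists in $[0,\infty)$ and is in particular finite on $\ran J$).

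Next, to upgrade to convergence of the full net, I would transfer form monotonicity to the associated operators: $\check{\calE}_\lambda\leq\check{\calE}_\mu$ for $\lambda\leq\mu$ implies $(\check{H}_\lambda+1)^{-1}\geq(\check{H}_\mu+1)^{-1}$ in the order of bounded positive operators. Hence $\bigl((\check{H}_\lambda+1)^{-1}\bigr)_{\lambda>0}$ is a monotone increasing family dominated by $I$ as $\lambda\downarrow 0$, so it has a strong limit, and this limit must agree with the sequential limit $(\check{H}+1)^{-1}$ already obtained. This yields the claimed strong resolvent convergence along the continuous parameter.

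The last assertion is then immediate: if $\check{\calE}_0$ is itself closable, then the characterization of the regular part as the largest closable subform gives $(\check{\calE}_0)_\reg=\check{\calE}_0$, whence $\check{H}$ is the operator associated with $\overline{\check{\calE}_0}$. The principal obstacle will be the correct invocation of Simon's regular-part theorem and the identification of its limit object with $(\check{\calE}_0)_\reg$ in our setting; once that is in place, passing from sequence to continuous parameter is a routine monotone-net bookkeeping.
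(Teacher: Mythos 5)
Your argument is correct and follows essentially the same route as the paper: both rest on Simon's monotone convergence theorem for decreasing sequences of closed positive forms, which simultaneously yields strong resolvent convergence and identifies the limit operator as the one associated with $\overline{(\check{\calE}_0)_\reg}$. The only organizational difference is that the paper cites Kato's Theorem VIII.3.11 for the resolvent convergence and Simon's Theorem 3.2 for the identification, while you extract both from Simon along a fixed sequence $\lambda_n\downarrow 0$ and then add an explicit monotone-net (Vigier-type) argument to pass from sequences to the full limit $\lambda\downarrow 0$ — a point the paper leaves implicit.
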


\begin{proof}
  For $\lambda>0$ the form $\check{\calE}_\lambda$ is densely defined, positive and closed.
  By Theorem \ref{thm:Dirichlet_principle} the family $(\check{\calE}_\lambda)_\lambda$ is monotone increasing.
  Making use  of \cite[Theorem VIII.3.11]{Kato} we conclude that there is a positive self-adjoint operator in $\calHaux$, which we denote by  $\check H$, such that
  \[
    \lim_{\lambda\downarrow 0}(\check {H}_\lambda +1)^{-1}=(\check H + 1)^{-1} \;\text{strongly}.
  \]
  Moreover from \cite[Theorem 3.2]{Simon} we infer that $\check H$ is the self-adjoint operator associated with the closure of $(\check{\calE}_0)_\reg$.
  The last claim of the theorem follows from the definition of the regular part of a quadratic form.
\end{proof}

From now on we let $\check\calE$ be the densely defined positive closed quadratic form associated to $\check H$ via the second representation theorem \cite[Theorem VI.2.23]{Kato}:
\[
  \check\calD:=\dom \check\calE :=\dom \check{H}^{1/2},\quad \check\calE[\psi]:=(\check{H}^{1/2}\psi,\check{H}^{1/2}\psi)_{\aux}.
\]
We shall call $\check\calE$ the {\em trace} of $\calE$ with respect to $J$. Note that $\check{\calE} = \overline{(\check{\calE}_0)_\reg}$.
Let us quote that from the definition of the regular part we have $\ran J\subseteq \dom \check\calE$.
Hence the domain of $\check\calE$ is the closure of $\ran J$ w.r.t.\ $\sqrt{\check\calE[\cdot] +\|\cdot \|^2_{\aux}}$.

\begin{rk}
\label{rem:construction}
  {\rm(a)}
  Let $\lambda>0$.
  One may ask whether the trace of $\calE_\lambda$ agrees with $\check{\calE}_\lambda$ from \eqref{construction1}.
  In Proposition \ref{consistency} we will show that the construction is consistent.

  {\rm(b)}
  Since strong resolvent convergence of the associated operators is equivalent to Mosco convergence of the corresponding positive quadratic forms
  we can rephrase Theorem \ref{construction} such that $\check{\calE}$ is the Mosco limit of $(\check{\calE}_\lambda)_{\lambda>0}$ as $\lambda$ decreases to $0$.

  {\rm(c)}
  The operator $\check{H}$ is characterized by
  \begin{align*}
    \dom \check{H} & = \bigl\{\psi\in \dom \check\calE:\; \exists\phi\in\calHaux:  \check\calE(\psi,\varphi)=(\phi,\varphi)_\aux \quad\text{for all }\varphi\in \dom \check\calE \bigr\}, \\
    \check{H}\psi & = \phi.
  \end{align*}
  Since $\ran J$ is a core for $\check\calE$, the domain of $\check H$ is also given by
  \[
  \dom \check{H} = \bigl\{\psi\in \dom\check\calE:\; \exists\phi\in\calHaux: \check\calE(\psi,Jv)=(\phi,Jv)_{\aux}  \quad\text{for all }v\in \dom J \bigr\}.
  \]

  {\rm(d)}
  If $\calHaux = \calH$, $\dom J$ dense in $\calH$ and $J$ (and hence also $J_\lambda$) is the natural embedding $J\from \dom J\ni u\mapsto u\in\calH$, then $\check{\calE} = \overline{\calE|_{\dom J}}$.
  Indeed, we then obtain $P_{\lambda} u = u$ for all $u\in \dom J$ and $\lambda>0$ and $\check{\calE}_\lambda = \calE_\lambda|_{\dom J}$ for all $\lambda>0$.
  Thus, if $\dom J$ is a core for $\calE$ then $\check{\calE} = \calE$.

  {\rm(e)}
  In case $\calE$ is a positive form, but not necessarily closed, we can first consider the closure $\overline{\calE_\reg}$ of its regular part and then apply Theorem \ref{construction} to obtain its trace.
\end{rk}

We now show that our construction agrees with the one obtained in \cite{Arendt1}.

\begin{theo}
\label{thm:constructions_agree}
  Let $a$ be a form defined by $\dom a:=\dom J$, $a[u]:=\calE[u]$ for $u\in \dom J$. Let $A$ be the operator associated with $(a,J)$ according to \cite[Theorem 3.2]{Arendt1}. Then
  $A=\cH$.
\end{theo}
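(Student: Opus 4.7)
The plan is to verify $A=\check{H}$ by comparing the defining weak equations of the two operators. By \cite[Theorem 3.2]{Arendt1}, the Arendt--ter Elst operator $A$ is characterised by: $\psi\in\dom A$ with $A\psi=\phi$ iff there exists $u\in\dom J$ with $Ju=\psi$ and
\[
  \calE(u,v)=(\phi,Jv)_{\aux}\qquad (v\in\dom J).
\]
On the other hand, Remark \ref{rem:construction}(c) yields: $\psi\in\dom\check{H}$ with $\check{H}\psi=\phi$ iff $\psi\in\dom\check\calE$ and
\[
  \check\calE(\psi,Jv)=(\phi,Jv)_{\aux}\qquad (v\in\dom J).
\]

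The connecting identity at the level of approximating forms is
\[
  \check\calE_\lambda(Ju,Jv)=\calE_\lambda(P_\lambda u,v)=\calE_\lambda(u,P_\lambda v)\qquad (u,v\in\dom J,\ \lambda>0),
\]
which follows from Lemma \ref{lem:projection} (ensuring $JP_\lambda u=Ju$, and hence $\check\calE_\lambda(Ju,Jv)=\calE_\lambda(P_\lambda u,P_\lambda v)$) together with the observation that $v-P_\lambda v\in\ker J\subseteq\ker J_\lambda$ is $\calE_\lambda$-orthogonal to $\ran P_\lambda$. To show $A\subseteq\check{H}$, I would take $(\psi,\phi)=(Ju,\phi)$ in the graph of $A$; note $\psi\in\dom\check\calE$ since $\ran J\subseteq\dom\check\calE$ as stated after Theorem \ref{construction}; and apply the identity above at each $\lambda>0$, passing to the limit $\lambda\downarrow 0$ so that the left-hand side converges to $\check\calE(Ju,Jv)$ and the right-hand side to $\calE(u,v)=(\phi,Jv)_{\aux}$. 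For the reverse inclusion $\check{H}\subseteq A$, I would produce the preimage $u\in\dom J$ by exploiting the approximating forms: choose $u_\lambda\in\dom J$ as the $\calE_\lambda$-Riesz representer of the variational problem determined by $\phi$, observe $Ju_\lambda=\check{H}_\lambda^{-1}(\ldots)$ via \eqref{checkH}, and extract a suitable limit $u$ satisfying $Ju=\psi$ and the weak equation against $v\in\dom J$.

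The main obstacle is the passage to the limit $\lambda\downarrow 0$ in the bilinear identity above. On the left, the monotone limit in $\lambda$ gives only $\check\calE_0$, which may differ from $\check\calE$ by a singular summand; one must verify that this singular part does not contribute to the bilinear form when tested against elements of $\ran J$, a fact that flows from $\ran J$ being a core for $\check\calE$ and from Simon's decomposition. On the right, the convergence $\calE_\lambda(P_\lambda u,v)\to \calE(u,v)$ requires understanding the behaviour of $P_\lambda u$ as $\lambda\downarrow 0$ inside the form domain, which can be analysed via the strong resolvent convergence $(\check H_\lambda+1)^{-1}\to(\check H+1)^{-1}$ of Theorem \ref{construction} combined with a Mosco-type argument for the family $(\calE_\lambda)_{\lambda>0}$. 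Once both limiting statements are secured, the two weak characterisations reduce to the same equation, and the equality $A=\check{H}$ follows.
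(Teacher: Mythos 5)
Your strategy — compare the weak equations defining $A$ and $\check{H}$ directly, by polarizing the Dirichlet principle and passing to the limit $\lambda\downarrow 0$ — is not the paper's, and it has two genuine gaps that you yourself flag but do not close.

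First, the characterisation of $A$ you quote from Arendt--ter Elst is not the one in \cite[Theorem 3.2]{Arendt1}: the general (non-$J$-elliptic) construction uses an approximating \emph{sequence} $(u_n)$ in $\dom a$ with $Ju_n\to\psi$, $\sup_n a[u_n]<\infty$ and $a(u_n,v)\to(\phi,Jv)_\aux$, not a single $u$ with $Ju=\psi$ satisfying $\calE(u,v)=(\phi,Jv)_\aux$ exactly. Your simplified version gives a restriction of $A$ and need not capture all of $\dom A$; the inclusion $A\subseteq\check{H}$ you outline is therefore not an argument about $A$ itself.

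Second, and more seriously, the two limits you need are precisely the hard part of the problem, and you assert rather than prove them. On the left, $\check\calE_\lambda(Ju,Jv)\downarrow\check\calE_0(Ju,Jv)$, and $\check\calE_0$ can differ from $\check\calE=\overline{(\check\calE_0)_\reg}$ by a nontrivial singular summand; saying that the singular part ``does not contribute when tested against $\ran J$'' is exactly what one would have to prove, and it is false for quadratic expressions $\check\calE_0[Ju]$ versus $\check\calE[Ju]$ in general. On the right, $\calE_\lambda(P_\lambda u,v)=\calE(P_\lambda u,v)+\lambda(P_\lambda u,v)\to\calE(u,v)$ would require $\calE(u-P_\lambda u,v)\to 0$ for all $v\in\dom J$, i.e.\ some weak convergence of $P_\lambda u$ in the $\calE$-geometry that is nowhere established. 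The reverse inclusion $\check{H}\subseteq A$ is sketched even more loosely.

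The paper's proof sidesteps all of this by operating one level up. It first proves $A_\lambda=\check H_\lambda$ for every fixed $\lambda>0$, where $A_\lambda$ is the Arendt--ter Elst operator for $a_\lambda:=\calE_\lambda|_{\dom J}$: here the sequence condition is verified with the constant sequence $u_n:=P_\lambda u$ and Lemma \ref{lem:projection}, and self-adjointness of both sides upgrades the inclusion to equality. Then it invokes two black-box strong resolvent convergence results — \cite[Theorem 3.7]{Arendt1} for $(A_\lambda)$ and Theorem \ref{construction} for $(\check H_\lambda)$ — and concludes $A=\check H$ by uniqueness of strong limits. This avoids ever having to control $\check\calE_\lambda(Ju,Jv)$ or $P_\lambda u$ as $\lambda\downarrow 0$. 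If you want to make your approach work you would essentially have to reprove these two resolvent-convergence theorems in bilinear-form language, which is harder than citing them.
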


\begin{proof}
  First, note that $a$ is $J$-sectorial since $\calE$ is positive, and $J(\dom J) = \ran J$ is dense in $\calHaux$ by assumption on $J$.
  Further, since $a$ is symmetric and positive, $A$ is self-adjoint and positive by \cite[Remark 3.5]{Arendt1}.

  (i)
  Let $\lambda>0$.
  Define $a_\lambda:=\calE_\lambda|_{\dom J}$. Then $a_\lambda$ is $J$-sectorial and $J(\dom a_\lambda) = \ran J$ is dense in $\calHaux$.
  Let $A_\lambda$ be the operator associated with $(a_\lambda,J)$ according to \cite[Theorem 3.2]{Arendt1}. By \cite[Remark 3.5]{Arendt1}, $A_\lambda$ is self-adjoint.
  We show that $A_\lambda = \cH_\lambda$. Indeed, let $x\in \dom \cH_\lambda \subseteq \dom \ccalE_\lambda = \ran J$. Let $u\in \dom J$ such that $Ju=x$.
  Then for $v\in \dom J$ we obtain
  \[(\cH_\lambda x,Jv)_{\aux} = \ccalE_\lambda(x,Jv) = \ccalE_\lambda(Ju,Jv) = \calE_\lambda(P_\lambda u,P_\lambda v) = \calE_\lambda(P_\lambda u,v).\]
  By Lemma \ref{lem:projection}, $P_\lambda u\in \dom J$ and $x=Ju = JP_\lambda u$.
  Define $u_n:=P_\lambda u$ for all $n\in\N$. Then $Ju_n = x$ for all $n\in\N$,
  \[\sup_{n\in\N} a_\lambda[u_n] = \sup_{n\in\N} \calE_\lambda [P_\lambda u] \leq \calE_\lambda[u],\]
  and
  \[\lim_{n\to\infty} a_\lambda(u_n,v) = \lim_{n\to\infty} \calE_\lambda(P_\lambda u,v) = (\cH_\lambda x,J(v))_{\aux}\]
  for all $v\in \dom J = \dom a_\lambda$. Hence, $x\in \dom A_\lambda$ and $A_\lambda x = \cH_\lambda x$, i.e.\ $\cH_\lambda\subseteq A_\lambda$. Since both operators are self-adjoint, they are equal.

  (ii)
  Note that $\dom a_\lambda = \dom a$ and $a_\lambda[u]-a[u] = \lambda\|u\|^2\geq 0$ for all $u\in \dom a$. Moreover, $\lim_{\lambda\to 0} a_\lambda [u] = a[u]$ for all $u\in \dom a$, and $J(\dom a_\lambda) = J(\dom J) = \ran J$ is dense in $\calHaux$.
  By \cite[Theorem 3.7]{Arendt1}, we have
  \[\lim_{\lambda\to 0} (A_\lambda+1)^{-1} = (A+1)^{-1} \quad\text{strongly}.\]

  (iii)
  By Theorem \ref{construction}, we have
  \[\lim_{\lambda\to 0} (\cH_\lambda+1)^{-1} = (\cH+1)^{-1} \quad\text{strongly}.\]
  Since strong limits are unique we obtain $A=\cH$.
\end{proof}

\begin{rk}
  {\rm(a)}
      Note that the construction in \cite{Arendt1} is valid for more general situations than we consider here; $J$ just needs to be linear with dense range and $a$ only needs to be a $J$-sectorial sesquilinear form. However, then the operator associated with $(a,J)$ is described in a somewhat implicit form in \cite[Theorem 3.2]{Arendt1}.

 {\rm(b)}
      In case $\dom J = \calD$ and $J_1$ is bounded on $(\calD,\calE_1^{1/2})$, we can also apply \cite[Theorem 4.2]{vogt} to obtain a self-adjoint operator (which is actually $\check{H}$) associated with $(\calE,J)$, and then
      obtain $\ccalE$ as the form associated with this operator. Note that since $\calE$ is symmetric and hence $J$-sectorial, the constructions in \cite[Theorem 4.2]{vogt} and \cite[Theorem 3.2]{Arendt1} agree.
\end{rk}

Next we proceed to show that our construction is consistent. We start by showing the Dirichlet principle for the form $\ccalE_0$, analogously to Theorem \ref{thm:Dirichlet_principle}.

\begin{lem}
\label{Infimum}
  Let $u\in\dom J$. Then
  \[
    \ccalE_0[Ju]=\inf\{\calE[v]:\; v\in\dom J,\ Jv=Ju\}.
  \]
\end{lem}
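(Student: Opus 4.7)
The plan is to combine the Dirichlet principle (Theorem \ref{thm:Dirichlet_principle}) with an elementary monotone convergence argument to commute the limit $\lambda\downarrow 0$ with the infimum over the fibre $M:=\{v\in\dom J:\; Jv=Ju\}$.

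First, I would invoke Theorem \ref{thm:Dirichlet_principle} to rewrite, for each $\lambda>0$,
\[
  \ccalE_\lambda[Ju] = I_\lambda := \inf_{v\in M}\bigl(\calE[v]+\lambda\|v\|^2\bigr),
\]
and set $I_0 := \inf_{v\in M}\calE[v]$. By the definition of $\ccalE_0$ we have $\ccalE_0[Ju]=\lim_{\lambda\downarrow 0}\ccalE_\lambda[Ju]=\lim_{\lambda\downarrow 0}I_\lambda$. This limit exists in $[0,\infty]$ because $(I_\lambda)_{\lambda>0}$ is monotone non-decreasing in $\lambda$ (again a consequence of Theorem \ref{thm:Dirichlet_principle}, since enlarging $\lambda$ can only enlarge the functional being minimised) and is bounded below by $I_0\geq 0$.

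Next I would verify the two inequalities separately. The trivial bound $\calE[v]\leq \calE[v]+\lambda\|v\|^2$ for every $v\in M$ gives $I_0\leq I_\lambda$, and passing to the limit yields $I_0\leq \ccalE_0[Ju]$. For the reverse inequality, fix any $v\in M$. For each $\lambda>0$ the element $v$ is admissible for $I_\lambda$, so
\[
  I_\lambda \leq \calE[v]+\lambda\|v\|^2.
\]
Letting $\lambda\downarrow 0$ gives $\ccalE_0[Ju]\leq \calE[v]$, and taking the infimum over $v\in M$ produces $\ccalE_0[Ju]\leq I_0$. Combining the two bounds yields the stated identity.

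There is no real obstacle here: Theorem \ref{thm:Dirichlet_principle} has already packaged the variational content of $\ccalE_\lambda$, and the remaining limit–infimum interchange is the standard monotone argument exploiting that $\lambda\|v\|^2\geq 0$ and vanishes as $\lambda\downarrow 0$ for every fixed admissible $v$. The only point worth flagging is that both sides may a priori equal $+\infty$, but the monotonicity in $\lambda$ ensures the limit is well defined in $[0,\infty]$, so the argument goes through without case distinction.
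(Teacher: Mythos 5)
Your proof is correct and follows essentially the same route as the paper: rewrite $\ccalE_\lambda[Ju]$ via the Dirichlet principle, obtain one inequality from $\calE\leq\calE_\lambda$, and the other by fixing an admissible $v$ and letting $\lambda\downarrow 0$ before taking the infimum. The only cosmetic difference is that you first observe monotonicity of $I_\lambda$ to justify the limit, which the paper has already packaged into Theorem \ref{thm:Dirichlet_principle}; the substance is identical.
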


\begin{proof}
  By definition of $\ccalE_0$ and Theorem \ref{thm:Dirichlet_principle} we obtain
  \begin{align*}
    \ccalE_0[Ju] & = \lim_{\lambda\downarrow 0} \ccalE_\lambda[Ju] = \lim_{\lambda\downarrow 0} \inf\{\calE_\lambda[v]:\; v\in\dom J,\ Jv=Ju\} \\
    & \geq \inf\{\calE[v]:\; v\in\dom J,\ Jv=Ju\}.
  \end{align*}
  Conversely, let $v\in\dom J$ such that $Jv=Ju$. Then
  \[
    \inf\{\calE_\lambda[w]:\; w\in\dom J,\ Jw=Ju\} \leq \calE_\lambda[v]
  \]
  for all $\lambda>0$. Passing to the limit leads to $\ccalE_0[Ju]\leq \calE[v]$. Thus,
  \[
    \ccalE_0[Ju] \leq \inf\{\calE[v]:\; v\in\dom J,\ Jv=Ju\},
  \]
  which finishes the proof.
\end{proof}

The following proposition is actually a consequence of Theorem \ref{thm:constructions_agree} and \cite[Theorem 3.7]{Arendt1}. However, we shall give an independent proof.

\begin{prop}
  Let $\beta>0$. The trace of  $\calE_\beta$ is the form  $\check{\calE}_\beta$ as given by (\ref{construction1}). In other words,
  \[
    \check{\calE}_\beta=\lim_{\lambda\downarrow 0}\check{\calE}_{\beta+\lambda}.
  \]
  \label{consistency}
\end{prop}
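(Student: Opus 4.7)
The plan is to run the construction of Theorem \ref{construction} starting from the closed positive form $\calE_\beta$ in place of $\calE$, and identify the result with $\ccalE_\beta$ defined by (\ref{construction1}).

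First I would observe that applying the recipe of (\ref{construction1}) to $\calE_\beta$ produces, for each $\lambda>0$, the form obtained from the shift $(\calE_\beta)_\lambda = \calE_\beta+\lambda\|\cdot\|^2 = \calE_{\beta+\lambda}$. The corresponding orthogonal complement of $\ker J$ and orthogonal projection are nothing but $(\ker J)^{\perp_{\calE_{\beta+\lambda}}}$ and $P_{\beta+\lambda}$, so the approximating trace forms for $\calE_\beta$ are exactly the forms $\ccalE_{\beta+\lambda}$ already defined. Consequently, denoting temporarily by $F$ the trace of $\calE_\beta$ supplied by Theorem \ref{construction}, one has $F=\overline{(F_0)_\reg}$, where
\[
F_0\from \ran J\to[0,\infty),\quad F_0[Ju] := \lim_{\lambda\downarrow 0}\ccalE_{\beta+\lambda}[Ju] \quad (u\in\dom J).
\]
Thus the whole statement reduces to the pointwise identity $F_0[Ju] = \ccalE_\beta[Ju]$ on $\ran J$: since $\ccalE_\beta$ is already closed by \cite[Theorem 1.1]{BBBMath}, it coincides with its regular part and with its closure, so $F=\ccalE_\beta$ automatically.

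For the pointwise equality I would copy the pattern of Lemma \ref{Infimum}, invoking the Dirichlet principle of Theorem \ref{thm:Dirichlet_principle} twice. The inequality $F_0[Ju]\geq \ccalE_\beta[Ju]$ is immediate from the monotonicity $\ccalE_\beta\leq\ccalE_{\beta+\lambda}$ recorded in that theorem. For the reverse, fix $u\in\dom J$ and $\varepsilon>0$; applying the Dirichlet principle at level $\beta$, pick $v_\varepsilon\in\dom J$ with $Jv_\varepsilon = Ju$ and $\calE_\beta[v_\varepsilon]\leq\ccalE_\beta[Ju]+\varepsilon$. Applying the Dirichlet principle at level $\beta+\lambda$ to the competitor $v_\varepsilon$ yields
\[
\ccalE_{\beta+\lambda}[Ju] \leq \calE_{\beta+\lambda}[v_\varepsilon] = \calE_\beta[v_\varepsilon] + \lambda\|v_\varepsilon\|^2.
\]
Letting $\lambda\downarrow 0$ gives $F_0[Ju]\leq\calE_\beta[v_\varepsilon]\leq\ccalE_\beta[Ju]+\varepsilon$, and then $\varepsilon\downarrow 0$ concludes the proof.

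I do not anticipate a serious obstacle: the whole argument is two applications of the Dirichlet principle combined with the bookkeeping observation that the construction (\ref{construction1}) produces the same projections $P_{\beta+\lambda}$ whether one regards the shift parameter $\lambda$ as arising from $\calE$ or from $\calE_\beta$. The one point worth stating explicitly is that closedness of $\ccalE_\beta$ is what trivializes the regular-part-and-closure step of Theorem \ref{construction}, so that the pointwise convergence $\ccalE_{\beta+\lambda}\downarrow\ccalE_\beta$ on $\ran J$ already delivers the full conclusion.
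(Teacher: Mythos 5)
Your proof is correct and takes essentially the same route as the paper: the paper simply invokes Lemma \ref{Infimum} (applied with $\calE_\beta$ in place of $\calE$) together with Theorem \ref{thm:Dirichlet_principle} to get $\lim_{\lambda\downarrow 0}\ccalE_{\beta+\lambda}[Ju]=\inf\{\calE_\beta[v]:Jv=Ju\}=\ccalE_\beta[Ju]$, whereas you re-derive those two inequalities in-line via monotonicity and a near-minimizer argument. In both cases closedness of $\ccalE_\beta$ is then what collapses the regular-part-and-closure step of Theorem \ref{construction}.
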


\begin{proof}
  Let $u\in\dom J$. Applying Lemma \ref{Infimum} to $\calE_\beta$ (instead of $\calE$) and taking into account Theorem \ref{thm:Dirichlet_principle} we obtain
  \[
    \lim_{\lambda\downarrow 0}\ccalE_{\beta+\lambda}[Ju]=\inf\{\calE_\beta[v]:\;v\in\dom J,\ Jv=Ju\}=\ccalE_\beta[Ju].
  \]
  Since $\ccalE_\beta$ is closed, Theorem \ref{construction} yields $\lim_{\lambda\downarrow 0}\ccalE_{\beta+\lambda} =\ccalE_\beta$.
\end{proof}

The following result expresses the fact that some properties of the operator $\check{H}$ are strongly related to those of $J_1$.
A similar result can be found in \cite[Proposition 4.20]{vogt} (note that the corresponding construction of traces is different).

\begin{theo}
\label{compact}
  Let $\dom J = \calD$ and $J_1\from (\calD,\calE_1)\to \calHaux$ be bounded.
  
  {\rm(a)}
  Let $\ccalE_0$ be closed and $J_1$ compact. Then $\check{H}$ has compact resolvent.
  
  {\rm(b)}
  Let $\check{H}$ have compact resolvent. Then $J_1$ is compact.
\end{theo}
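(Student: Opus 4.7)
The proof splits into the two implications. Part (b) should follow quickly from the Dirichlet principle together with the standard equivalence between compact resolvent and compact form-domain embedding; part (a) is less immediate and calls for factoring the embedding $\dom \check\calE \hookrightarrow \calHaux$ through $J_1$ via a bounded section of $J$, with an open mapping argument as the main step.

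For (b), the plan is to use that $\check H$ has compact resolvent iff the embedding $\dom \check\calE \hookrightarrow \calHaux$ is compact when $\dom \check\calE$ carries the form norm $\psi \mapsto (\check\calE[\psi] + \|\psi\|_{\aux}^2)^{1/2}$. Given $(u_n)$ bounded in $(\calD, \calE_1^{1/2})$, I would set $\psi_n := Ju_n \in \ran J \subseteq \dom \check\calE$. Since $\check\calE = \overline{(\check\calE_0)_\reg}$ coincides with $(\check\calE_0)_\reg$ on $\ran J$ and $(\check\calE_0)_\reg \leq \check\calE_0$, Lemma \ref{Infimum} gives $\check\calE[\psi_n] \leq \calE[u_n]$; meanwhile boundedness of $J_1$ yields $\|\psi_n\|_{\aux}^2 \leq M\,\calE_1[u_n]$. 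Hence $(\psi_n)$ is bounded in $\dom\check\calE$, admits a convergent subsequence in $\calHaux$, and $J_1$ is compact.

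For (a), the plan is to produce a bounded linear section $S\colon \dom\check\calE \to \calD$ of $J$ (bounded from the form norm of $\check\calE$ into $\calE_1^{1/2}$); then $J_1\circ S$ expresses the embedding $i\colon\dom\check\calE\hookrightarrow\calHaux$ as a bounded map composed with the compact $J_1$, so $i$ is compact and $\check H$ has compact resolvent. Since $\check\calE_0$ is closed, $\dom\check\calE=\ran J$, and the natural candidate is $S\psi:=P_1 u$ for any $u$ with $Ju=\psi$; this is well-defined because the ambiguity in $u$ lies in $\ker J=\ker J_1$, which $P_1$ annihilates, and Lemma \ref{lem:projection} gives $JS\psi=\psi$. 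Equivalently, $S$ is the inverse of the restriction $J|_{\ran P_1}\colon (\ran P_1,\calE_1^{1/2}) \to \dom\check\calE$, which is bounded via $\check\calE[Ju]+\|Ju\|_{\aux}^2 \leq \calE[u]+M\,\calE_1[u] \leq (1+M)\,\calE_1[u]$ (combining Lemma \ref{Infimum} with boundedness of $J_1$), injective since $\ran P_1\cap\ker J=\{0\}$, and surjective onto $\ran J=\dom\check\calE$ by Lemma \ref{lem:projection}.

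The main obstacle is to upgrade this set-theoretic bijection to a topological isomorphism, for which I intend to invoke the open mapping theorem: $\ran P_1$ is closed in the Hilbert space $(\calD,\calE_1^{1/2})$ and hence itself Hilbert, while $\dom\check\calE$ equipped with its form norm is a Hilbert space because $\check\calE$ is closed by construction. Thus the bounded bijection $J|_{\ran P_1}$ admits a bounded inverse $S$. The hypothesis that $\check\calE_0$ is closed enters precisely here: it ensures $\dom\check\calE=\ran J$ so that $S$ is defined on the whole form domain. Without it, $\ran J$ would only be dense in $\dom\check\calE$ and one would have to patch the argument with a density/approximation step.
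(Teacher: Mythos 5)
Your proof is correct. For part (a) the essential content matches the paper's: $J|_{\ran P_1}$ is by definition of $\check{\calE}_1$ an isometry from $(\ran P_1,\calE_1^{1/2})$ onto $(\ran J,\check{\calE}_1^{1/2})$, so boundedness of your section $S$ from the $(\check{\calE})_1^{1/2}$-norm is exactly the paper's equivalence of $\check{\calE}_1^{1/2}$ and $(\check{\calE})_1^{1/2}$ on $\ran J$, established there by the same open-mapping step; you then differ only in that you factor the form-domain embedding as $J_1\circ S$, while the paper passes through compactness of $J_1J_1^{*}=\check{H}_1^{-1}$ and a lemma of Weidmann ($T$ compact iff $TT^{*}$ compact). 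Part (b) is genuinely different. The paper works on the operator side: from $\check{\calE}\leq\check{\calE}_1$ it derives $0\leq(\check{H}_1+1)^{-1}\leq(\check{H}+1)^{-1}$, transfers compactness through the square roots, concludes that $\check{H}_1^{-1}=J_1J_1^{*}$ is compact, and invokes Weidmann once more. You stay on the form side: the bound $\check{\calE}[Ju]+\|Ju\|_{\aux}^2\leq(1+\|J_1\|^2)\calE_1[u]$, obtained from Lemma \ref{Infimum} together with $\check{\calE}\leq\check{\calE}_0$ on $\ran J$ and boundedness of $J_1$, shows that $J_1$ sends $\calE_1$-bounded sets to sets bounded in the form norm of $\check{\calE}$, and compactness of $J_1$ is read off directly from the compact embedding of the form domain of $\check{\calE}$ into $\calHaux$. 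Your (b) is more direct and avoids both the $J_1J_1^{*}$ identification and the appeal to Weidmann.
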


\begin{proof}
  (a) 
  Note that $\ccalE = \ccalE_0$ since $\ccalE_0$ is closed. In particular, $\dom\ccalE = \ran J$.
  Let us set
  \[
    (\check{\calE})_1:= \check\calE +\|\cdot\|_{\aux}^2.
  \]
  It is well known that the operator $\check{H}$ has compact resolvent if and only if the embedding $i\from (\ran J, (\check{\calE})_1^{1/2})\to\calHaux$ is compact.
  By the boundedness assumption for $J_1$ and the definition of $\check{\calE}_1$, we obtain
  \[
    \|Ju\|_{\aux}^2=\|JP_1u\|_{\aux}^2\leq \|J_1\|^2 \calE_1[P_1u]= \|J_1\|^2 \check{\calE}_1[Ju],
  \]
  for all $u\in\calD$. Thus, $(\check{\calE})_1$ is $\check{\calE}_1$-bounded. Since both $(\ran J,(\check{\calE})_1^{1/2})$ and $(\ran J,\check{\calE}_1^{1/2} )$ are Hilbert spaces, the latter inequality together with the open mapping theorem yield equivalence of the norms $(\check{\calE})_1^{1/2}$ and $\check{\calE}_1^{1/2}$. As by assumption $J_1$ is compact then according to  \cite[Satz 3.5]{Weidmann}, $J_1J_1^*=\check{H_1}^{-1}$ is compact as well, which in turn yields compactness of the embedding $(\ran J,\check{\calE}_1^{1/2})\to\calHaux$. Accordingly the embedding $(\ran J,(\check{\calE})_1^{1/2})\to\calHaux$ is compact and $\check{H}$ has compact resolvent.

  (b) 
  Note that $0\leq\ccalE\leq\check{\calE}_1$ and therefore
  \[
    0\leq (\check{H}_1 +1)^{-1}\leq (\check{H} +1)^{-1}.
  \]
  Thus, we obtain
  \[
    \| (\check{H}_1+1)^{-\frac{1}{2}}\psi\|_\aux\leq \|(\check{H}+1)^{-\frac{1}{2}}\psi\|_\aux \quad\text{for all } \psi\in \calH_\aux.
  \]
  Since $(\check{H}+1)^{-1}$ is compact, also $(\check{H}+1)^{-\frac{1}{2}}$ is compact.
  Hence, $(\check{H}_1+1)^{-\frac{1}{2}}$ is also compact, which in turn implies the compactness of $(\check{H}_1 +1)^{-1}$. Therefore, also $\check{H}_1^{-1}=J_1J_1^*$ is compact. By \cite[Satz 3.5]{Weidmann}, $J_1J_1^*$ is compact if and only if $J_1$ is compact.
\end{proof}

\begin{rk}
  We shall show in Remark \ref{J-elliptic} that the form $\ccalE_0$ is closed in case $\calE$ is $J$-elliptic. Thus, Theorem \ref{compact}(a) is a generalization of \cite[Lemma 2.7]{Arendt1}.
\end{rk}

\section{Some special situations for constructions of traces}
\label{sec:special_Situations}

In this section we provide concrete relevant situations, in which the trace form is computed explicitly.
We start with the following situation.
In many applications, especially from PDEs, it may happen that the quadratic form $\calE$ defines a scalar product on $\calD$.
For this particular situation, we shall give an explicit description of the trace form.
Let $\calD_\e$ be the  abstract completion of $\calD$ w.r.t.\ $\calE(\cdot)^{1/2}$.
Then the quadratic form $\calE$ extends in a natural way to a bounded quadratic form on the Hilbert space $(\calD_\e,\calE)$ which we still denote by $\calE$. Suppose that $\ker J$ is $\calE$-closed. Let $(\ker J)^{\perp_{\calE}}$ be the $\calE$-orthogonal complement of $\ker J$ in the Hilbert space $(\calD_\e,\calE)$ and let $P$ the $\calE$-orthogonal projection onto $(\ker J)^{\perp_{\calE}}$.
In this framework we construct a form $Q$, as before, by
\begin{equation}
  \dom Q := \ran J,\quad Q[Ju] := \calE[Pu] \quad\text{for all } u\in \dom J,
\label{trans}
\end{equation}
analogously to \eqref{construction1}.
Obviously, $Q$ is well-defined.

\begin{prop}
\label{prop-trans}
  Let $\calE$ define a scalar product on $\calD$. Assume that $\ker J$ is $\calE$-closed and let $Q$ be the quadratic form defined by \eqref{trans}.
  Then $\check{\calE} = \overline{Q_\reg}$.
  Moreover, if $J\from \dom J\subseteq (\calD_\e,\calE)\to \calHaux$ is closed then $\check{\calE} = Q$.
\end{prop}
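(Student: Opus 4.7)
My plan is to prove the first equality by establishing $\ccalE_0 = Q$ on $\ran J$, after which Theorem \ref{construction} immediately yields $\check{\calE} = \overline{(\ccalE_0)_\reg} = \overline{Q_\reg}$. The work is thus concentrated in matching $Q[Ju]$ with the infimum appearing in Lemma \ref{Infimum}.

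First I would establish an analogue of Lemma \ref{lem:projection} adapted to the completion: for every $u \in \dom J$ one has $Pu \in \dom J$ and $JPu = Ju$. The assumption that $\ker J$ is $\calE$-closed gives $u - Pu \in \ker J \subseteq \dom J$, hence $Pu = u - (u - Pu) \in \dom J$, and applying $J$ yields $JPu = Ju$. Next I would check that $Q[Ju] = \inf\{\calE[v]\colon v \in \dom J,\ Jv = Ju\}$. If $v \in \dom J$ satisfies $Jv = Ju$ then $v - u \in \ker J$, so $Pv = Pu$; the $\calE$-orthogonal decomposition $v = Pv + (v - Pv)$ with $v - Pv \in \ker J$ yields the Pythagorean identity $\calE[v] = \calE[Pv] + \calE[v - Pv] \geq \calE[Pu] = Q[Ju]$, with equality at $v = Pu$. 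Combining with Lemma \ref{Infimum} gives $\ccalE_0[Ju] = Q[Ju]$ for every $u \in \dom J$, so $\ccalE_0 = Q$ on $\ran J$.

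For the additional statement, I would show that the closedness hypothesis on $J\from \dom J \subseteq (\calD_\e,\calE) \to \calHaux$ forces $Q$ itself to be closed, whence $\overline{Q_\reg} = Q = \check{\calE}$. Given a sequence $(u_n) \subseteq \dom J$ with $Ju_n \to \psi$ in $\calHaux$ and $Q[J(u_n - u_m)] = \calE[P(u_n - u_m)] \to 0$, the sequence $(Pu_n)$ is $\calE$-Cauchy in $\calD_\e$ and converges to some $w \in \calD_\e$. Since $(\ker J)^{\perp_\calE}$ is $\calE$-closed we have $w \in (\ker J)^{\perp_\calE}$, so $Pw = w$. The relation $JPu_n = Ju_n \to \psi$ together with the assumed closedness of $J$ in $(\calD_\e,\calE)$ places $w$ in $\dom J$ with $Jw = \psi$, whence $\psi \in \ran J$ and $Q[\psi] = \calE[Pw] = \calE[w] = \lim_n \calE[Pu_n] = \lim_n Q[Ju_n]$, confirming closedness of $Q$.

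The main technical step deserving the most care is this last closedness argument, since it requires simultaneously controlling limits in the abstract completion $(\calD_\e,\calE)$ and in $\calHaux$, and invoking the hypothesis on $J$ at exactly the right juncture. The variational identification of $\ccalE_0$ with $Q$ is, by contrast, a fairly direct consequence of Pythagoras in $(\calD_\e,\calE)$, provided the preliminary stability property $Pu \in \dom J$ is in hand.
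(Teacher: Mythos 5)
Your proof is correct and follows essentially the same route as the paper: establish that $Pu\in\dom J$ with $JPu=Ju$, use the Pythagorean identity in $(\calD_\e,\calE)$ to identify $Q[Ju]$ with the infimum of Lemma \ref{Infimum}, deduce $Q=\ccalE_0$ and hence $\check\calE=\overline{Q_\reg}$ via Theorem \ref{construction}, and finally show $Q$ is closed under the extra hypothesis. The only difference is that the paper delegates the closedness of $Q$ to ``mimicking the proof of [BBBMath, Theorem 1.1]'', whereas you spell out that argument in full; your version is a welcome clarification but not a structurally different proof.
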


\begin{proof}
  Since $P$ is the $\calE$-orthogonal projection onto $(\ker J)^\perp$, for $u\in \dom J$ we have $u-Pu\in \ker J$ and therefore $Pu\in \dom J$ and $Ju = JPu$.
  (This is essentially Lemma \ref{lem:projection}; there we only used that $\ker J$ is closed.)
  Consequently, the Dirichlet principle still holds true. Hence, together with Lemma \ref{Infimum} this yields for $u\in \dom J$ 
  \[Q[Ju] = \calE[Pu] = \inf\{\calE[v]:\; v\in\dom J,\ Jv=Ju\} = \ccalE_0[Ju].\]
  Thus, $Q=\ccalE_0$. By Theorem \ref{construction}, we achieve $\overline{Q_\reg}=\check\calE$.

  Now, assume that $J\from \dom J\subseteq (\calD_\e,\calE)\to \calHaux$ is closed. Then $\ker J$ is closed (w.r.t. $\calE$). Hence mimicking the proof of \cite[Theorem 1.1]{BBBMath} we conclude that the quadratic form $Q$ defined by \eqref{trans}
  is closed. Hence $Q = \overline{Q_\reg} = \check\calE$.
\end{proof}

Towards providing other situations for which an explicit computation of $\check\calE$ is still possible we introduce the vector space
\begin{align*}
  \Har:=\{u\in \dom J:\; \calE(u,v)=0 \quad \text{for all }v\in \ker J\}.
\end{align*}
Assume that $\dom J$ decomposes into  a direct sum
\begin{align*}
  \dom J=\Har\oplus \ker J.
\end{align*}
For each $u\in \dom J$ let $E_\har u$ be the unique element in $\Har$ such that
\[
  u= E_\har u + (u-E_\har u),
\]
where the decomposition is unique. Then $E_\har$ is the projection from $\dom J$ onto $\Har$ along $\ker J$, and $E_\har u$ can be interpreted as a abstract `harmonic extension' of $Ju$ for $u\in \dom J$; cf.\ Example \ref{ex:1/2-Laplacian} for a similar construction inspiring the name.
Define $\calE_\har$ in $\calHaux$ by
\[
  \dom \calE_\har :=\ran J,\quad \calE_\har[Ju] := \calE[E_\har u] \quad\text{for all } u\in \dom J.
\]
Clearly, $\calE_\har$ is then well-defined (by the direct sum assumption $u\in\ker J$ implies $E_\har u = 0$).

\begin{lem}
\label{quadharm}
  Assume $\dom J=\Har\oplus \ker J$.
  Let $u\in \dom J$. Then
  \[
    \calE(E_\har u,v) = \calE(u,E_\har v)  \quad\text{for all } v\in \dom J.
  \]
\end{lem}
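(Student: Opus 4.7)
The plan is to exploit the direct sum decomposition $\dom J = \Har \oplus \ker J$ to split both arguments of $\calE$, and then kill the cross terms using the defining orthogonality property of $\Har$, namely $\calE(w,k) = 0$ whenever $w \in \Har$ and $k \in \ker J$. Since $\calE$ is a symmetric bilinear form (being the polarization of a positive quadratic form), these vanishing identities apply to either slot.

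Concretely, I would first write $v = E_\har v + (v - E_\har v)$ and expand
\[
\calE(E_\har u, v) = \calE(E_\har u, E_\har v) + \calE(E_\har u,\, v - E_\har v).
\]
The second summand is zero because $E_\har u \in \Har$ while $v - E_\har v \in \ker J$, by the very definition of $\Har$. Analogously, decomposing $u = E_\har u + (u - E_\har u)$ yields
\[
\calE(u, E_\har v) = \calE(E_\har u, E_\har v) + \calE(u - E_\har u,\, E_\har v),
\]
and the second summand again vanishes since $u - E_\har u \in \ker J$ and $E_\har v \in \Har$, using symmetry of $\calE$ to apply the defining property of $\Har$ in the other slot. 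Comparing the two expressions gives $\calE(E_\har u, v) = \calE(u, E_\har v) = \calE(E_\har u, E_\har v)$.

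There is no real obstacle here: the lemma is essentially the standard statement that the projection $E_\har$ is symmetric with respect to the bilinear form $\calE$, once one has the direct sum decomposition and the orthogonality built into the definition of $\Har$. The only point to be mindful of is that $\calE$ is being used as a (possibly degenerate) symmetric bilinear form on $\dom J$, not as an inner product, so one should not invoke any nondegeneracy of $\calE$; only bilinearity, symmetry, and the vanishing $\calE(\Har,\ker J) = 0$ are needed.
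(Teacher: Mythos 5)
Your argument is correct and follows exactly the paper's approach: decompose each argument via $\dom J = \Har \oplus \ker J$ and kill the cross terms using $\calE(\Har,\ker J) = 0$. The paper's proof is a touch more compact, handling the second decomposition with a single "by symmetry," but the content is identical.
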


\begin{proof}
  Let $v\in \dom J$. Then $\calE(E_\har u,v-E_\har v) = 0$. Thus
  \[
    \calE(E_\har u,v) = \calE(E_\har u, E_\har v) + \calE(E_\har u, v-E_\har v) = \calE(E_\har u, E_\har v).
  \]
  By symmetry,
  \[
    \calE(E_\har u,v) = \calE(E_\har u, E_\har v) = \calE(u, E_\har v). \qedhere
  \]
\end{proof}

Mimicking the proof of Proposition \ref{prop-trans} we obtain:

\begin{prop}
\label{constructiondecomp}
  Assume $\dom J=\Har\oplus \ker J$.
  Then the trace form $\check{\calE}$ coincides with the closure of the regular part of $\calE_\har$.
\end{prop}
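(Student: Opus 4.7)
The plan is to mirror the proof of Proposition \ref{prop-trans}. I would (i) establish a Dirichlet-principle identity
\[
  \calE_\har[Ju] = \inf\{\calE[v]:\; v\in\dom J,\ Jv=Ju\} \quad \text{for all } u\in\dom J,
\]
(ii) combine this with Lemma \ref{Infimum} to conclude $\calE_\har = \ccalE_0$, and (iii) invoke Theorem \ref{construction} to obtain $\check{\calE} = \overline{(\ccalE_0)_\reg} = \overline{(\calE_\har)_\reg}$, which is the claim.

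For step (i), the inequality $\leq$ is immediate: the decomposition $u = E_\har u + (u - E_\har u)$ with $u - E_\har u \in \ker J$ shows that $E_\har u \in \dom J$ is an admissible competitor with $JE_\har u = Ju$ and $\calE[E_\har u] = \calE_\har[Ju]$. For the reverse inequality, let $v \in \dom J$ with $Jv = Ju$. Decomposing $v = E_\har v + (v - E_\har v)$ gives $JE_\har v = Jv = Ju = JE_\har u$, so $E_\har u - E_\har v \in \Har \cap \ker J$. The directness of the sum $\dom J = \Har \oplus \ker J$ forces $\Har \cap \ker J = \{0\}$, whence $E_\har v = E_\har u$ and therefore $v - E_\har u = v - E_\har v \in \ker J$. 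Expanding
\[
  \calE[v] = \calE[E_\har u] + 2\calE(E_\har u,\, v - E_\har u) + \calE[v - E_\har u]
\]
and using that $E_\har u \in \Har$ kills the cross term against the element $v - E_\har u$ of $\ker J$ (by the very definition of $\Har$), I obtain $\calE[v] \geq \calE[E_\har u] = \calE_\har[Ju]$, completing the identity.

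The main subtlety, rather than a genuine obstacle, is the well-definedness of $E_\har$ (and hence of $\calE_\har$), which is precisely what the directness of the sum is designed to supply; the same argument that shows $E_\har v = E_\har u$ whenever $Jv = Ju$ is what makes the definition $\calE_\har[Ju] := \calE[E_\har u]$ unambiguous. Apart from this, the proof is a direct transcription of Proposition \ref{prop-trans}, with the role of the $\calE$-orthogonal projection $P$ there played here by the algebraic projection $E_\har$ onto $\Har$ along $\ker J$. Note that Lemma \ref{quadharm} is not needed for the above, although it becomes useful when one wishes to identify the sesquilinear form $\calE_\har(Ju, Jv) = \calE(E_\har u, E_\har v)$.
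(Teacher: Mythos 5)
Your proposal is correct and is precisely the "mimicking the proof of Proposition \ref{prop-trans}" that the paper leaves to the reader: you establish the Dirichlet principle $\calE_\har[Ju]=\inf\{\calE[v]:v\in\dom J,\ Jv=Ju\}$ using the algebraic projection $E_\har$ in place of the orthogonal projection $P$, identify $\calE_\har=\ccalE_0$ via Lemma \ref{Infimum}, and conclude by Theorem \ref{construction}. The expansion of $\calE[v]$ with the cross term killed by the orthogonality built into $\Har$ is exactly the right substitute for the inequality $\calE[v]\geq\calE[Pv]$ used in Proposition \ref{prop-trans}.
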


Here is a sufficient condition for $\calE_\har$ to be closed and hence for $\check{\calE} = \calE_\har$.
\begin{lem}
\label{lem:calH_har_closed}
  Let $\dom J = \Har\oplus\ker J$.
  Assume that $(\calD,\calE)$ is a Hilbert space.
  Then $\calE_{\har}$ is closed.
\end{lem}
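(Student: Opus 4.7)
The plan is to reduce to Proposition \ref{prop-trans} by showing that, under our hypotheses, the form $\calE_\har$ coincides with the form $Q$ defined in \eqref{trans}.

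First I would verify that $J\from \dom J\subseteq (\calD_\e,\calE)\to \calHaux$ is closed, which is the hypothesis needed for the second part of Proposition \ref{prop-trans}. Since $(\calD,\calE)$ is already a Hilbert space we have $\calD_\e = \calD$; moreover both $(\calD,\calE^{1/2})$ and $(\calD,\calE_1^{1/2})$ are Hilbert spaces, and the identity map from the latter to the former is continuous because $\calE\leq \calE_1$. Since this identity is bijective, the open mapping theorem yields equivalence of the two norms on $\calD$. Consequently the closedness of $J$ in $(\calD,\calE_1^{1/2})$ in force throughout Section \ref{sec:Traces_quadratic_forms} transfers to closedness of $J$ in $(\calD,\calE^{1/2})$, and in particular $\ker J$ is $\calE$-closed, so the $\calE$-orthogonal projection $P$ onto $(\ker J)^{\perp_\calE}$ is well defined.

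Next I would identify $Q$ with $\calE_\har$. By the very definition of $\Har$ we have $\Har\subseteq (\ker J)^{\perp_\calE}$. For $u\in \dom J$, the direct sum decomposition gives $u = E_\har u + (u - E_\har u)$ with $E_\har u\in \Har$ and $u - E_\har u\in \ker J$. Applying $P$ and using that $P$ vanishes on $\ker J$ and equals the identity on $(\ker J)^{\perp_\calE}$, I would obtain $Pu = E_\har u$. Hence $Q[Ju] = \calE[Pu] = \calE[E_\har u] = \calE_\har[Ju]$ for every $u\in\dom J$, so $Q = \calE_\har$ as forms on $\ran J$.

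Proposition \ref{prop-trans} then gives $\check\calE = Q$, and since $\check\calE$ is closed by construction this forces $\calE_\har$ to be closed. The only slightly nontrivial step is the open mapping argument producing the norm equivalence between $\calE^{1/2}$ and $\calE_1^{1/2}$; once this is available, the identification of the two projections is immediate from the direct sum hypothesis, and everything else is bookkeeping.
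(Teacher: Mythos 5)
Your proof is correct, but it takes a different route from the paper's. The paper verifies closedness of $\calE_\har$ directly: it takes a sequence $(u_n)$ in $\dom J$ with $(Ju_n)$ Cauchy for $\calE_\har$ and $Ju_n\to\tilde u$ in $\calHaux$, notes that $\calE[E_\har u_n-E_\har u_m]=\calE_\har[Ju_n-Ju_m]\to0$, uses completeness of $(\calD,\calE)$ to get a limit $u=E_\har u$, and then the $\calE$-closedness of $J$ (obtained, as in your argument, from the open mapping theorem applied to the continuous bijection $(\calD,\calE_1^{1/2})\to(\calD,\calE^{1/2})$) to conclude $u\in\dom J$, $Ju=\tilde u$, and $\calE_\har[Ju_n-Ju]\to0$. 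You instead reduce to Proposition \ref{prop-trans} by observing that under the direct-sum hypothesis $\dom J=\Har\oplus\ker J$ the $\calE$-orthogonal projection $P$ onto $(\ker J)^{\perp_\calE}$ restricts on $\dom J$ to the algebraic projection $E_\har$: indeed $\Har\subseteq(\ker J)^{\perp_\calE}$ and $P$ annihilates $\ker J$, so $Pu=E_\har u$, whence $Q=\calE_\har$ and Proposition \ref{prop-trans} gives $\calE_\har=Q=\ccalE$, which is closed. Both arguments hinge on the same two ingredients (norm equivalence of $\calE^{1/2}$ and $\calE_1^{1/2}$, and completeness), but yours is a clean conceptual reduction that makes explicit the identification $P=E_\har$ on $\dom J$ and delivers the extra conclusion $\ccalE=\calE_\har$ simultaneously, while the paper's is a self-contained Cauchy-sequence check that does not invoke Proposition \ref{prop-trans} (whose own closedness step is only sketched by reference to an external argument). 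Your version is a legitimate and arguably more illuminating proof.
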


\begin{proof}
  Note that $(\calD,\calE_1)\ni u\mapsto u \in(\calD,\calE)$ is a contractive bijection between Banach spaces, hence has a continuous inverse. Thus, $J$ is $\calE$-closed.

  Let $(u_n)$ in $\dom J$ such that $(Ju_n)$ is a Cauchy-sequence for $\calE_{\har}$ and $Ju_n\to \tilde{u}$ in $\calHaux$ for some $\tilde{u}\in\calHaux$.
  Then
  \[\calE[E_\har u_n - E_\har u_m] = \calE_{\har}[Ju_n-Ju_m] \to 0 \quad(m,n\to\infty),\]
  so $(E_\har u_n)$ is a Cauchy-sequence for $\calE$.
  Since $(\calD,\calE)$ is a Hilbert space, there exists $u\in \calD$ such that $E_\har u_n\to u$ in $\calE$. Note that $E_\har u = u$.
  Since $Ju_n = JE_\har u_n$ for all $n\in\N$ and $J$ is $\calE$-closed, we obtain $u\in \dom J$ and $Ju = \tilde{u}$.
  Hence, $\tilde{u} = Ju\in \dom \calE_{\har}$ and $\calE_{\har}[Ju_n-Ju] = \calE[E_\har u_n-E_\har u] = \calE[E_\har u_n-u]\to 0$.
  Thus, $\calE_{\har}$ is closed.
\end{proof}

For an application of the situation in Lemma \ref{lem:calH_har_closed} see \cite{FreibergSeifert2015, SeifertVoigt2011}.

By means of Proposition \ref{constructiondecomp} we can now handle the following case.
Assume that $\ker J$ is dense in $\calH$ and define the form $\calE_\Dir$ in $\calH$ by
\[
  \dom \calE_\Dir := \ker J,\quad \calE_\Dir[u] := \calE[u] \quad\text{for all } u\in \dom \calE_\Dir
\]
Then $\calE_\Dir$ is closed. Indeed, let $(u_n)$ in $\dom \calE_\Dir$, $u\in\calH$, $\calE_\Dir[u_n-u_m]\to 0$ ($m,n\to\infty$), $u_n\to u$ in $\calH$.
Then $\calE[u_n-u_m]\to 0$. Since $\calE$ is closed, we obtain $u\in \calD$ and $\calE[u_n-u]\to 0$. Since $J_1$ is closed and $Ju_n = 0\to 0$ we obtain $u\in \ker J = \dom \calE_\Dir$. Thus,
\[
  \calE_\Dir[u_n-u] = \calE[u_n-u]\to 0.
\]
Let $L_\Dir$ be the positive self-adjoint operator associated with $\calE_\Dir$.
Assume that
\[
  \Har\cap \ker J=\{0\}.
\]
Then $L_\Dir$ is injective. Indeed, let $u\in \dom L_\Dir\subseteq \ker J \subseteq \calD$ such that $L_\Dir u = 0$. Then, for $v\in \ker J$ we obtain
\[
  \calE(u,v) = (L_\Dir u, v) = 0.
\]
Thus, $u\in \Har$, and therefore $u\in\Har\cap \ker J = \{0\}$.
Assume that $L_\Dir$ is surjective.
For $u\in\calD$ and $\lambda>0$ set
\[
  v_\lambda := \lambda L_\Dir^{-1}P_\lambda u + P_\lambda u.
\]

\begin{lem}
\label{indep}
  Let $\ker J$ be dense in $\calH$, $\Har\cap \ker J=\{0\}$ and $L_\Dir$ surjective.
  
  {\rm(a)}
      Let $u\in \dom J$, $\lambda>0$. Then $v_\lambda\in\Har$ and $Jv_\lambda=Ju$.
      Furthermore, $v_\lambda$ is $\lambda$-independent.
      
  {\rm(b)}
      $\dom J = \Har\oplus \ker J$. Moreover, $E_\har u = v_1$ for $u\in \dom J$.
\end{lem}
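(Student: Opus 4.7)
The plan is to prove part (a) by directly unpacking the two summands defining $v_\lambda$, using the definitions of $L_\Dir$ (associated with $\calE_\Dir$) and of the $\calE_\lambda$-orthogonal projection $P_\lambda$; part (b) will then be a short consequence.

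First I would check that $v_\lambda \in \dom J$ and identify $Jv_\lambda$. For $u\in\dom J$, Lemma \ref{lem:projection} yields $P_\lambda u \in \dom J$ with $JP_\lambda u = Ju$. Since $L_\Dir$ is injective (by $\Har\cap\ker J=\{0\}$) and surjective by assumption, $L_\Dir^{-1}$ is defined on all of $\calH$ and maps into $\dom L_\Dir \subseteq \ker J \subseteq \dom J$. Thus both summands lie in $\dom J$, so $v_\lambda\in\dom J$, and $Jv_\lambda = \lambda\, J(L_\Dir^{-1}P_\lambda u) + JP_\lambda u = 0 + Ju = Ju$.

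The heart of the argument is showing $v_\lambda\in\Har$, i.e.\ $\calE(v_\lambda,w)=0$ for every $w\in\ker J = \dom\calE_\Dir$. Here I combine two orthogonality relations. Since $L_\Dir^{-1}P_\lambda u \in \dom L_\Dir$, the identity associated to $L_\Dir$ on $\calE_\Dir$ gives
\[
  \calE(L_\Dir^{-1}P_\lambda u, w) = \calE_\Dir(L_\Dir^{-1}P_\lambda u, w) = (P_\lambda u, w).
\]
On the other hand, $P_\lambda u \in (\ker J_\lambda)^{\perp_{\calE_\lambda}}$ and $\ker J_\lambda = \ker J$, so $\calE_\lambda(P_\lambda u, w)=0$, whence
\[
  \calE(P_\lambda u, w) = -\lambda (P_\lambda u, w).
\]
Adding $\lambda$ times the first identity to the second makes the terms cancel exactly, yielding $\calE(v_\lambda,w)=0$. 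This is the only step that is not bookkeeping; everything else is formal. The $\lambda$-independence then follows for free: given $\lambda,\mu>0$, the difference $v_\lambda - v_\mu$ lies in $\Har$ (a vector space) and, because $Jv_\lambda = Jv_\mu = Ju$, also in $\ker J$, hence in $\Har\cap\ker J=\{0\}$.

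For part (b), the decomposition $u = v_1 + (u - v_1)$ places $v_1\in\Har$ by part (a) and $u - v_1 \in \ker J$ since $Jv_1 = Ju$, establishing $\dom J = \Har + \ker J$. Directness of the sum is immediate from $\Har\cap\ker J=\{0\}$: two decompositions $u = h_1+k_1 = h_2+k_2$ force $h_1-h_2 = k_2-k_1 \in \Har\cap\ker J = \{0\}$. By the very definition of $E_\har u$ as the unique element of $\Har$ in the decomposition of $u$, this gives $E_\har u = v_1$.
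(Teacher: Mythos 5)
Your proof is correct and follows essentially the same route as the paper's: membership in $\dom J$ and $Jv_\lambda=Ju$ via Lemma \ref{lem:projection} and $\ran L_\Dir^{-1}\subseteq\ker J$, the computation $\calE(v_\lambda,w)=0$ by combining $\calE_\Dir(L_\Dir^{-1}P_\lambda u,w)=(P_\lambda u,w)$ with $\calE_\lambda(P_\lambda u,w)=0$, and $\lambda$-independence from $\Har\cap\ker J=\{0\}$. For (b) the paper rearranges $u=P_\lambda u+(u-P_\lambda u)$ while you write $u=v_1+(u-v_1)$ directly, but these are trivially equivalent presentations of the same decomposition.
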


\begin{proof}
  Since $\ran L_\Dir^{-1} = \dom L_\Dir \subseteq \ker J$ and $P_\lambda u\in \dom J$ for all $u\in\dom J$ by Lemma \ref{lem:projection}, we get $v_\lambda\in \dom J$ and $Jv_\lambda=JP_\lambda u=Ju$.

  (a)
  Let $v\in \ker J$ ($=\dom \calE_\Dir$). Then $P_\lambda u \bot_{\calE_\lambda} v$ and therefore
  \begin{align*}
    \calE(v_\lambda,v) & = \lambda\calE(L_\Dir^{-1}P_\lambda u,v) + \calE(P_\lambda u,v)
    = \lambda\calE_\Dir(L_\Dir^{-1}P_\lambda u,v) - \lambda(P_\lambda u,v) \\
    & = \lambda(P_\lambda u,v)-\lambda(P_\lambda u,v) = 0.
  \end{align*}
  Thus, $v_\lambda\in\Har$.
  Hence, for $\lambda,\lambda'>0$ we obtain $v_\lambda-v_{\lambda'}\in \Har\cap \ker J=\{0\}$.

  (b)
  Let $u\in \dom J$. Then $u = P_\lambda u + u-P_\lambda u = v_\lambda - \lambda L_\Dir^{-1}P_\lambda u + u-P_\lambda u$,
  where $v_\lambda \in \Har$ and $u-v_\lambda=-\lambda L_\Dir^{-1}P_\lambda u + u-P_\lambda u\in \ker J$. Hence, making use of assertion (a) we obtain $\dom J = \Har\oplus \ker J$.

  Observing that $u=v_1 + (u-v_1)\in \Har\oplus \ker J$ yields $E_\har u = v_1$ by definition of $E_\har$.
\end{proof}

\begin{prop}
\label{constharm}
  Let $\ker J$ be dense in $\calH$, $\Har\cap \ker J=\{0\}$ and $L_\Dir$ surjective.
  Then $\check{\calE}=\overline{(\calE_\har)_\reg}$.
\end{prop}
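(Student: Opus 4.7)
The proof should be a direct combination of the two preceding results. The plan is to verify that the hypotheses of Proposition \ref{constructiondecomp} are satisfied under the assumptions of this proposition, and then simply invoke that proposition to obtain the conclusion $\check{\calE} = \overline{(\calE_\har)_\reg}$.

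More precisely, the single hypothesis needed in Proposition \ref{constructiondecomp} is the direct sum decomposition $\dom J = \Har \oplus \ker J$. But this is exactly the content of Lemma \ref{indep}(b), which has already been established under the three standing assumptions: $\ker J$ dense in $\calH$, $\Har \cap \ker J = \{0\}$, and $L_\Dir$ surjective. Thus the proof reduces to chaining these two results together, with no additional calculation required.

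There is no real obstacle here because all the work has been done in the preceding lemmas. The only point worth flagging is that we are not claiming $\calE_\har$ is closed (we do not have the hypotheses of Lemma \ref{lem:calH_har_closed}), so the conclusion must be phrased in terms of the closure of the regular part of $\calE_\har$, matching the statement of Proposition \ref{constructiondecomp} verbatim.

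\begin{proof}
By Lemma \ref{indep}(b), the assumptions that $\ker J$ is dense in $\calH$, $\Har \cap \ker J = \{0\}$, and $L_\Dir$ is surjective imply that
\[
  \dom J = \Har \oplus \ker J.
\]
The conclusion $\check{\calE} = \overline{(\calE_\har)_\reg}$ now follows directly from Proposition \ref{constructiondecomp}.
\end{proof}
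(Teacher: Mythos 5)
Your proof is correct and matches the paper's own argument exactly: both invoke Lemma \ref{indep}(b) to obtain the decomposition $\dom J = \Har \oplus \ker J$ and then conclude via Proposition \ref{constructiondecomp}.
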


\begin{proof}
  By Lemma \ref{indep} we have $\dom J = \Har\oplus \ker J$. Now the result follows from Proposition \ref{constructiondecomp}.
\end{proof}

\begin{rk}
    Proposition \ref{constharm} is inspired from the construction of the trace of the quad\-ratic form associated with the Neumann-Laplacian
    on bounded open subsets of $\R^n$ with Lipschitz boundary (see e.g.\ \cite{Daners}).
    The corresponding operator is then the Dirichlet-to-Neumann operator.
\end{rk}

As a next step we shall give another general case where $\dom J=\Har\oplus\ker J$ is fulfilled
and hence Proposition \ref{constructiondecomp} can be applied.
Consider the form $\calE^{J}$ in $\calH$ defined by
\[
  \dom \calE^J := \dom J,\quad \calE^J[u] :=\calE[u]+\|Ju\|_{\aux}^2 \quad\text{for all } u\in \dom \calE^J.
\]
Then
\[
  \Har=\{u\in \dom J:\; \calE^J(u,v)=0 \quad\text{for all } v\in \ker J\}.
\]
Assume that $\calE^J$ defines a scalar product on $\dom J$.
Let us denote by $\calH^J$ the $\calE^J$-completion of $\dom J$ and by $P^J$ the $\calE^J$-orthogonal projection onto the $\calE^J$-orthogonal complement of $\ker J$.

\begin{prop}
\label{prop:calE_har}
  Assume that $\calE^J$ defines a scalar product on $\dom J$ and $\ker J$ is $\calE^J$-closed.
  Then $\dom J=\Har\oplus \ker J$. Moreover,
  \[
  \calE_\har[Ju] = \calE[P^Ju] \quad\text{for all } u\in \dom J.
  \]
\end{prop}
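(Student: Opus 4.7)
The plan is to use the orthogonal projection $P^J$ in the Hilbert space $(\calH^J,\calE^J)$ to produce the required decomposition explicitly, and then read off the formula. The assumption that $\calE^J$ defines a scalar product on $\dom J$ makes $\calH^J$ a genuine Hilbert space, and the $\calE^J$-closedness of $\ker J$ ensures that $P^J$, the projection onto $(\ker J)^{\perp_{\calE^J}}\subseteq \calH^J$, is well-defined, with complementary projection $I-P^J$ onto $\ker J$.

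First I would record the observation, noted already in the excerpt, that for any $v\in\ker J$ we have $Jv=0$, so $\calE^J(u,v)=\calE(u,v)$ for every $u\in\dom J$. Hence the identity
\[\Har=\{u\in\dom J:\;\calE^J(u,v)=0\text{ for all }v\in\ker J\}\]
holds; equivalently, $u\in\Har$ iff $u\in\dom J$ and $u\in(\ker J)^{\perp_{\calE^J}}$.

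The core step is then short. For $u\in\dom J\subseteq \calH^J$, decompose in $\calH^J$ as $u=P^Ju+(u-P^Ju)$. Since $u-P^Ju\in\ker J\subseteq\dom J$ and $u\in\dom J$, one gets $P^Ju=u-(u-P^Ju)\in\dom J$. Combined with $P^Ju\in(\ker J)^{\perp_{\calE^J}}$, the preceding remark gives $P^Ju\in\Har$. To see the sum is direct, let $u\in\Har\cap\ker J$; then $Ju=0$ and taking $v=u$ in the defining relation of $\Har$ gives $\calE(u,u)=0$, whence $\calE^J[u]=\calE[u]+\|Ju\|^2_{\aux}=0$ and therefore $u=0$, since $\calE^J$ is a scalar product. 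This establishes $\dom J=\Har\oplus\ker J$.

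Finally, uniqueness of the decomposition $u=E_\har u+(u-E_\har u)\in\Har\oplus\ker J$ forces $E_\har u=P^Ju$, yielding $\calE_\har[Ju]=\calE[E_\har u]=\calE[P^Ju]$. The only subtlety I would need to watch is the distinction between $(\dom J,\calE^J)$ and its completion $\calH^J$---concretely, verifying that the Hilbert-space projection $P^Ju$ lands back in $\dom J$---but this is handled cleanly by rewriting $P^Ju=u-(u-P^Ju)$ together with $\ker J\subseteq\dom J$.
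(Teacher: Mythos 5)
Your proof is correct and follows essentially the same route as the paper: decompose $u = P^Ju + (u - P^Ju)$ in $\calH^J$, use $\calE^J$-closedness of $\ker J$ to conclude $u - P^Ju \in \ker J \subseteq \dom J$ (hence $P^Ju \in \dom J \cap (\ker J)^{\perp_{\calE^J}} = \Har$), verify directness via the scalar-product assumption, and deduce $E_\har u = P^Ju$ from uniqueness. The only difference is that you spell out the directness step (the paper dispatches it with a single "By assumption"), which is a welcome clarification but not a change of strategy.
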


\begin{proof}
  By  assumption we obtain $\Har\cap\ker J=\{0\}$.
  Thus, we have to show that every $u\in \dom J$ admits a decomposition.
  Let $u\in \dom J$. As, by assumption, $\ker J$ is $\calE^J$-closed we obtain $u-P^J u\in\ker J\subseteq \dom J$. Consequently, $P^Ju\in\dom J$. Hence, $P^Ju\in\Har$ and $u=P^Ju + u-P^Ju$. Therefore, we obtain $\dom J=\Har\oplus \ker J$.

  It remains to prove that $P^Ju=E_\har u$ for all $u\in \dom J$.
  Let $u\in \dom J$. Then $u$ admits a unique decomposition $u = E_\har u + (u-E_\har u)$ with $E_\har u\in \Har$. Since $u = P^Ju + (u-P^Ju)$ with $P^J u\in\Har$, we observe $E_\har u = P^Ju$.
\end{proof}

\begin{rk}\label{J-elliptic}
  {\rm(a)}
  Assume that $\calE$ is $J$-elliptic, i.e.\ $J$ is everywhere defined and bounded on $\calD$ and there exist $\beta\in\R,\ \alpha>0$ such that
  \[
  \calE[u] + \beta\|Ju\|_{\aux}^2\geq \alpha\calE_1[u] \quad\text{for all }u\in\calD.
  \]
  Then $\calE^J$ yields a scalar product on $\calD$ and $J$ is $\calE^J$-closed.
  Thus, $\ker J$ is $\calE^J$-closed.
  Hence, applying Proposition \ref{prop:calE_har} and then Proposition \ref{constructiondecomp} we obtain $\ccalE = \overline{(\calE_\har)_{\reg}}$.
  Moreover, a straightforward computation shows that the form $\calE_\har$ is closed, and therefore 
  $\ccalE = \ccalE_0 = \calE_\har$.

  {\rm(b)}
  Assume that $\Har\cap \ker J=\{0\}$. Then $\calE^J$ defines a scalar product on $\dom J$.
  Indeed, for $u\in \dom J$ with $\calE^J[u] = 0$ we obtain $u\in \ker J$ and $\calE[u] = 0$.
  Hence, by the Cauchy-Schwarz inequality
  \[
    |\calE(u,v)| \leq \calE[u]^{1/2} \calE[v]^{1/2} = 0
  \]
  for all $v\in \ker J$ and therefore $u\in \Har$. Hence, $u=0$.
\end{rk}
\section{Examples}
\label{sec:examples}

In this section we work out some examples to illustrate our method for constructing traces of forms.

\begin{exa}
  Let $\Omega,\Omega_0\subseteq \R^d$ be open and bounded with boundaries
  $\Gamma:=\partial \Omega$ and $\Gamma_0:=\partial\Omega_0$ such that
  $\overline{\Omega_0}\subseteq \Omega$. Assume that $\Gamma_0, \Gamma$ are $C^1$.
  Consider the quadratic form $\calE$ in $L^2(\Om)$ given by
  \[
    \calD := H_0^1(\Omega),\quad \calE[u]:=\int_{\Om} |\nabla u|^2,
  \]
  and let $J\colon \calD\to L^2(\Omega_0)$, $Ju:=u|_{\Omega_0}$.
  Then $(\calD,\calE)$ is a Hilbert space.
  Thus, we can construct $\check{\calE}$ by means of Proposition \ref{prop-trans}.
  Let $P$ be the $\calE$-orthogonal projection onto the $\calE$-orthogonal complement of $\ker J$.
  Obviously,
  \[
  (\ker J)^{\perp_\calE} = \{u\in H_0^1(\Omega): \Delta (u|_{\Omega\setminus \overline{\Omega_0}}) = 0\},
  \]
  and for each $u\in H_0^1(\Omega)$ we have that $Pu$ is the unique element in
  $H^1_0(\Omega)$ such that $\Delta (Pu|_{\Omega\setminus\overline{\Omega_0}})=0$ and $Pu|_{\Omega_0}=u|_{\Omega_0}$.
  The trace form $\check{\calE}$ is given by
  \[
    \dom \check{\calE} = \ran J = H^1(\Omega_0), \quad
    \check{\calE}(Ju,Jv) = \calE(Pu,Pv) = \calE(Pu,v) \quad\text{for all } u,v\in H_0^1(\Omega).
  \]
  Applying Green's formula, we derive
  \begin{align*}
    \check{\calE}(Ju,Jv) & = \int_{\Omega_0} \nabla Pu\cdot \nabla v + \int_{\Omega\setminus\Omega_0} \nabla Pu\cdot \nabla v \\
    & = \int_{\Omega_0} \nabla u \cdot \nabla v + \langle \gamma_1^- Pu,\gamma_0 v\rangle_{H^{-1/2}(\Gamma_0),H^{1/2}(\Gamma_0)},
  \end{align*}
  where $\gamma_1^- Pu \in H^{-1/2}(\Gamma_0)$ is the conormal derivative of $Pu|_{\Omega\setminus\overline{\Omega}_0}$ on $\Gamma_0$ and $\gamma_0v\in H^{1/2}(\Gamma_0)$ is the trace of $v$ on $\Gamma_0$; cf.\ \cite[Section 5.5.1]{DemengelDemengel2012}.
  Note that if $Pu|_{\Omega\setminus\overline{\Omega_0}}\in H^2(\Omega\setminus\overline{\Omega_0})$ the linear
  functional $\gamma_1^- Pu$ on $H^{1/2}(\Gamma_0)$ coincides with
  the strong conormal derivative $\partial_\nu Pu|_{\Gamma_0} = \nabla Pu|_{\Omega\setminus\overline{\Omega}_0} \cdot \nu\in L^2(\Gamma_0)$,
  where $\nu$ is the outward unit normal on $\Gamma_0$ (with respect to $\Omega\setminus\overline{\Omega_0}$).

  For $u\in H^1(\Omega_0)$ such that $\Delta u\in L^2(\Omega_0)$ we set $Pu:=P\tilde{u}$, where
  $\tilde{u}$ is any extension of $u$ in $H^1_0(\Omega)$ (for the existence of such an extension see e.g.\ \cite[Proposition 2.70]{DemengelDemengel2012}),
  and let $\gamma_1^+u \in H^{1/2}(\Gamma_0)$ be the conormal derivative of $u$ on $\Gamma_0$ (with respect to $\Omega_0$).
  Let $\check{H}$ be the positive self-adjoint operator
  associated with $\check{\mathcal{E}}$. Then
  \[
    \dom \check{H} = \{u\in H^1(\Omega_0):\; \Delta u\in L^2(\Omega_0),\, \gamma_1^+ u + \gamma_1^- Pu=0\},\quad
    \check{H}u = -\Delta u.
  \]
  Indeed, note that for $f\in L^2(\Omega_0)$ we have $u\in \dom \check{H}$
  and $\check{H}u=f$ if and only if
  \[
    \check{\mathcal{E}}(u,v) = (f,v)_{L^2(\Omega_0)} \quad\text{for all } v\in H^1(\Omega_0).
  \]
  Let $u\in \dom \check{H}$.
  By taking $v\in C_c^\infty(\Omega_0)$ we obtain $\check{H}u=-\Delta u$.
  Green's formula yields
  \[
    \int_{\Omega_0} \nabla u \cdot \nabla v = (-\Delta u,v)_{L^2(\Omega_0)} + \langle \gamma_1^+u,\gamma_0 v\rangle_{H^{-1/2}(\Gamma_0),H^{1/2}(\Gamma_0)}.
  \]
  Thus, $\gamma_1^+ u+\gamma_1^- Pu=0$ in $H^{-1/2}(\Gamma_0)$.

  Conversely, if $u\in H^1(\Omega_0)$ such that $\Delta u \in L^2(\Omega_0)$ and $\gamma_1^+ u+\gamma_1^- Pu=0$ in $H^{-1/2}(\Gamma_0)$,
  then for all $v\in H^1(\Omega_0)$ we obtain
  \begin{align*}
    \int_{\Omega_0} \nabla u\cdot \nabla v & =  (-\Delta u,v)_{L^2(\Omega_0)} + \langle \gamma_1^+u,\gamma_0 v\rangle_{H^{-1/2}(\Gamma_0),H^{1/2}(\Gamma_0)} \\
    & = (-\Delta u,v)_{L^2(\Omega_0)} - \langle \gamma_1^- Pu,\gamma_0 v\rangle_{H^{-1/2}(\Gamma_0),H^{1/2}(\Gamma_0)},
  \end{align*}
  and therefore
  \[
    \check{\calE}(u,v) = \int_{\Omega_0} \nabla u\cdot \nabla v + \langle \gamma_1^- Pu,\gamma_0 v\rangle_{H^{-1/2}(\Gamma_0),H^{1/2}(\Gamma_0)} = (-\Delta u,v)_{L^2(\Omega_0)}.
  \]
  Thus, $u\in \dom \check{H}$ and $\check{H}u=-\Delta u$.

  Since the boundary of $\Om$ is of class $C^1$ and $\Omega_0$ is bounded, by Rellich-Kondrachov Theorem the embedding $(\calD,\calE_1^{1/2})\to L^2(\Omega_0)$ is compact.
  By Theorem \ref{compact} we obtain that $\check{H}$ has compact resolvent.
\end{exa}

Next, we revisit the $1/2$-Laplacian, see \cite{Caffarelli}.

\begin{exa}
\label{ex:1/2-Laplacian}
  Let $d\in\N$ and $\R^{d+1}_+:=\R^d\times (0,\infty)$.
  Let $\calH:=L^2(\R^{d+1}_+)$, $\calHaux:=L^2(\R^d)$, and define $\calE$ in $\calH$ by
  \[
    \calD:=H^1(\R^{d+1}_+),\quad \calE[u]:=\int_{\R^{d+1}_+}|\nabla u(x,t)|^2\,dx\,dt.
  \]
  Let $J\from\calD\to \calHaux$ be defined by $Ju:=\gamma_0 u$, where $\gamma_0$ is the trace of $u$ on the boundary of $\R^{d+1}_+$.
  Then $J$ is bounded on $(\calD,\calE_1^{1/2})$ and $\ran J= H^{1/2}(\R^d)$ is dense in $\calHaux$.
  Let $\psi\in\ran J$, $\lambda>0$. Let $u\in\calD$ such that $Ju=\psi$. Then $P_\lambda u$ is the unique element in $H^1(\R^{d+1}_+)$ which solves the boundary value problem
  \begin{align*}
    -\Delta P_\lambda u+\lambda P_\lambda u & = 0 \quad\text{ in }\R^{d+1}_+, \\
    P_\lambda u & = \psi  \quad\text{ on }\R^{d}.
  \end{align*}
  Thus, by Fourier transform with respect to the variable $x$ we obtain an ordinary differential equation
  \begin{align*}
    |\xi|^2 \widehat{P_\lambda u}(\xi,t) - {\frac{\partial^2 \widehat{P_\lambda u}}{\partial^2 t}}(\xi,t)
    +\lambda \widehat{P_\lambda u}(\xi,t) & = 0  \quad\text{ for } (\xi,t)\in \R^d\times (0,\infty), \\
    \widehat{P_\lambda u}(\xi,0) & = \hat{\psi}(\xi) \quad\text{ for } \xi\in \R^d.
  \end{align*}
  The solution is given by
  \[
    \widehat{P_\lambda u}(\xi,t)= e^{-\sqrt{|\xi|^2 +\lambda} \,t}\hat{\psi}(\xi).
  \]
  Hence,
  \begin{align*}
  \check{\calE}_\lambda[\psi] & = \calE_\lambda[P_\lambda u]
    = \int_0^\infty\int_{\R^d} |\nabla P_\lambda u(x,t)|^2\,dx\,dt + \lambda \int_0^\infty\int_{\R^d} |P_\lambda u(x,t)|^2\,dx\,dt \\
  & = \int_0^\infty\int_{\R^d} |\xi|^2|\widehat{P_\lambda u}(\xi,t)|^2\,d\xi\,dt + \int_0^\infty\int_{\R^d}  \Bigl|{\frac{\partial \widehat{P_\lambda u}}{\partial t}}(\xi,t)\Bigr|^2\,d\xi\,dt \\
  & + \lambda\int_0^\infty\int_{\R^d} |\widehat{P_\lambda u}(\xi,t)|^2\,d\xi\,dt.
  \end{align*}
  Using Fubini's Theorem and an integration by parts for the second integral in the latter identity we thus obtain
  \[
   \check{\calE}_\lambda[\psi] = \int_{\R^d}\sqrt{|\xi|^2 +\lambda}\,|\hat\psi(\xi)|^2\,d\xi \to \int_{\R^d}|\xi||\hat\psi(\xi)|^2\,d\xi.
  \]
  One can easily check that the limiting quadratic form is closed. Hence, from Theorem \ref{construction} we observe that $\check{\calE} = \lim_{\lambda\downarrow 0}\check{\calE}_\lambda$,
  which is nothing else but the closed positive form associated with $(-\Delta)^{1/2}$ on $\R^d$.
\end{exa}

\begin{exa}
  Let $\calE$ be the classical Dirichlet form in $L^2(\R)$, i.e.
  \[
    \calD := H^1(\R),\quad \calE(u,v) := \int u' \overline{v}' \quad\text{for all }u,v\in H^1(\R).
  \]
  Let $(a_n)_{n\in\Z}$ be a sequence in $(0,\infty)$ and
  $\mu:=\sum_{n\in\Z}a_n\delta_n$. By Sobolev's embedding theorem, every $u\in H^1(\R)$ has a unique continuous representative $\tilde{u}$.
  We shall assume that every element in $H^1(\R)$ is continuous. We define the operator $J$ from
  $\calD$ to $L^2(\R,\mu)$ by
  \[
    \dom J:=\{u\in H^1(\R):\; \sum_{n\in\Z} a_n |u(n)|^2 < \infty\},\quad Ju:=u|_{\Z} \quad\text{for all }u\in \dom J.
  \]
  Then $J$ is densely defined in $(\calD,\calE_1^{1/2})$ and the range of $J$ is dense in $L^2(\R,\mu)$.
  Moreover, $J$ is everywhere defined on $\calD$ and bounded on $(\calD,\calE_1^{1/2})$ if and only if $(a_n)$ is bounded.
  We claim that the operator $J$ is closed in $(\calD,\calE_1^{1/2})$. Indeed, let $(u^k)_k$ be a sequence in $\dom J$ such that $(u^k)_k$ converges to $u$ in
  $(\calD,\calE_1^{1/2})$ and
  $(Ju^k)_k$ converges to $v$ in $L^2(\R,\mu)$. Then, by Sobolev's inequality, the sequence $({u}^k)_k$ converges locally uniformly (and therefore pointwise) to $u$.
  Thus, $u=v$ $\mu$-a.e., yielding $u\in \dom J$ and $Ju=v$.

  For every $\lambda>0$ we obtain
  \begin{align*}
    \dom \check{\calE}_\lambda & = \ran J = \bigl\{\psi\in L^2(\R,\mu):\; \sum_{n\in\Z}|\psi(n)|^2<\infty \bigr\},\\
    \check{\calE}_\lambda[\psi] & = \frac{\sqrt{\lambda}}{\sinh\sqrt{\lambda}}\sum_{n\in\Z}|\psi(n+1)-\psi(n)|^2
    + 2\sqrt{\lambda}\frac{\cosh\sqrt{\lambda} - 1}{\sinh\sqrt{\lambda}} \sum_{n\in\Z} |\psi(n)|^2.
  \end{align*}
  Indeed, let $u\in H^1(\R)$. By Sobolev's inequality, applied on the intervals $(n-1/2,n+1/2)$, we obtain $\sum_{n\in\Z}|u(n)|^2<\infty$.
  Conversely, let $\psi\in L^2(\R,\mu)$ such that $\sum_{n\in\Z}|\psi(n)|^2<\infty$. Choose $\varphi\in C_c^\infty(\R)$ such that $\varphi(0) = 1$ and $\varphi(x)
  = 0$ if $|x|>1/2$. Then $\bigl(\psi(n)\varphi(\cdot-n)\bigr)_{n\in\Z}$ is an orthogonal system in $H^1(\R)$ and
  \[
    \sum_{n\in\Z}\|\psi(n)\varphi(\cdot-n)\|^2_{H^1(\R)}
    =\sum_{n\in\Z}|\psi(n)|^2\|\varphi\|^2_{H^1(\R)}<\infty.
  \]
  Thus $u:=\sum_{n\in\Z}\psi(n)\varphi(\cdot-n)\in
  H^1(\R)$. Since $u=\psi$ $\mu$-a.e., we get
  $\psi\in\ran J = \dom \check{\calE}_\lambda$. Thus,
  $\dom \check{\calE}_\lambda = \bigl\{\psi\in L^2(\R,\mu):\; \sum_{n\in\Z}|\psi(n)|^2<\infty\bigr\}$.
  Obviously,
  \[
    (\ker J_\lambda)^{\perp_{\calE_\lambda}}=\{u\in H^1(\R):\; -u''+\lambda u=0 \quad\text{in }\R\setminus\Z\}.
  \]
  Hence, for $u\in H^1(\R)$, we observe that $P_\lambda u$ is the unique element in $H^1(\R)$ such that
  \begin{align*}
    -(P_\lambda u)''+\lambda P_\lambda u & = 0 \quad\text{ in } \R\setminus\Z,\\
    P_\lambda u & = u \quad\text{ on } \Z.
  \end{align*}
  An elementary computation yields
  \begin{align*}
    P_\lambda u & = \frac{1}{\sinh\sqrt{\lambda}}\Bigl(u(n+1)\sinh\bigl(\sqrt{\lambda}(\cdot-n)\bigr)-u(n)
    \sinh\bigl(\sqrt{\lambda}(\cdot-n-1)\bigr)\Bigr)\quad\text{in } [n,n+1].
  \end{align*}
  For every $u\in \dom J$ we have
  \begin{align*}
    \check{\calE}_\lambda[Ju] & = \calE_\lambda[P_\lambda u] = \sum_{n\in\Z} \Bigl( \int_n^{n+1}|(P_\lambda u)'(x)|^2\,dx + \lambda\int_n^{n+1}|P_\lambda u(x)|^2\,dx\Bigr).
  \end{align*}
  Integrating by parts, we obtain
  \begin{align*}
    \check{\calE}_\lambda(Ju,Ju) & = \sum_{n\in\Z} P_\lambda u (\overline{P_\lambda u})' \bigr|_n^{n+1} \\
    & = \frac{\sqrt{\lambda}}{\sinh\sqrt{\lambda}} \sum_{n\in\Z} \bigl(- u(n+1)\overline{u(n)} - u(n)\overline{u(n+1)}\bigr) + 2\sqrt{\lambda}\frac{\cosh\sqrt{\lambda}}{\sinh\sqrt{\lambda}} \sum_{n\in\Z} |u(n)|^2 \\
    & = \frac{\sqrt{\lambda}}{\sinh\sqrt{\lambda}} \sum_{n\in\Z} |u(n+1)-u(n)|^2 + 2\sqrt{\lambda}\frac{\cosh\sqrt{\lambda} - 1}{\sinh\sqrt{\lambda}} \sum_{n\in\Z} |u(n)|^2.
  \end{align*}
  Letting $\lambda\downarrow 0$ we obtain
  \[
    \lim_{\lambda\downarrow 0}\check{\calE}_\lambda[Ju] = \sum_{n\in\Z}|u(n+1)-u(n)|^2 .
  \]
  The latter form is closable. Let $Q$ be the form defined by
   \[
      \dom Q = \bigl\{\psi\in L^2(\R,\mu):\; \sum_{n\in\Z}|\psi(n+1) - \psi(n)|^2<\infty\bigr\},\quad Q[\psi] = \sum_{n\in\Z}|\psi(n+1)-\psi(n)|^2.
  \]
  Then $\check\calE$ is a closed restriction of $Q$. Moreover if $\sum_n a_n=\infty$ then $\check\calE=Q$. In fact, in the latter case $\check\calE$ is the quadratic form associated with the (Neumann) graph Laplacian on the graph $\Z$ with measure determined by the sequence $(a_n)$, see e.g.\ \cite[Theorem 6]{KellerLenz2012}.\\
  Note that the sequence $(a_n)$ appears in $\check{\calE}$ only in an implicit way. In fact, it describes the measure of the space $\calHaux = L^2(\R,\mu)$, where the trace form, the form associated with the graph Laplacian, is defined on.
\end{exa}

More examples concerning singular diffusion can be found in \cite{SeifertVoigt2011, FreibergSeifert2015}.

\section{Traces of Dirichlet forms}
\label{sec:Traces_Dirichlet_forms}

In this section let $X$ be a locally compact separable metric space,
$m$ a positive Radon measure with full support $X$ and $\mu$ a positive Radon measure on $X$.
We set $\calH := L^2(X,m)$ and $\calHaux:=L^2(X,\mu)$ and assume that $\calE$ is a regular Dirichlet form in $L^2(X,m)$ with domain $\calD$.
Furthermore, let us assume that $\mu$ does not charge any sets of zero capacity.

It is well-known (see \cite[Theorem 2.1.3]{Fukushima}) that every element from the domain of a regular Dirichlet form possesses a quasi-continuous representative.
Moreover, two quasi-continuous representatives which coincide $m$-a.e.\ coincide quasi-everywhere and hence $\mu$-a.e.\ (see \cite[Lemma 2.1.4]{Fukushima}).
From now on we assume that all elements from $\calD$ are quasi-continuous.
Let $J\from \dom J:=\calD\cap L^2(X,\mu)\to L^2(X,\mu)$, $Ju:=u$. Then $J$ is well-defined.

\begin{lem}
\label{lem:properties_of_J}
    $J$ is densely defined, has dense range, and $J_1$ is closed.
\end{lem}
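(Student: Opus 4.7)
The plan is to establish each of the three claims separately, relying on regularity of $\calE$, the Radon property of $\mu$, the assumption that $\mu$ charges no set of zero capacity, and standard Dirichlet-form technology. Throughout I abbreviate $\calD_c:=\calD\cap C_c(X)$.

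For denseness of $\dom J$ in $(\calD,\calE_1^{1/2})$, I would first invoke regularity of $\calE$ to get that $\calD_c$ is $\calE_1^{1/2}$-dense in $\calD$. Since $\mu$ is Radon, $\mu(K)<\infty$ for every compact $K\subseteq X$, and hence $\|\varphi\|_{L^2(X,\mu)}^2\leq \|\varphi\|_\infty^2\,\mu(\supp\varphi)<\infty$ for every $\varphi\in C_c(X)$. Thus $\calD_c\subseteq \dom J$, and denseness follows immediately.

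For denseness of $\ran J$ in $L^2(X,\mu)$, I would use that $C_c(X)$ is dense in $L^2(X,\mu)$ (standard for a Radon measure on a locally compact separable metric space) and then show that $\calD_c$ is $L^2(X,\mu)$-dense in $C_c(X)$. Given $f\in C_c(X)$ with $K:=\supp f$, fix a relatively compact open $U\supseteq K$ and, by regularity, choose $\chi\in \calD_c$ with $0\leq \chi\leq 1$, $\chi=1$ on $K$, and $\supp\chi\subseteq \overline U$. Picking $g_n\in \calD_c$ with $g_n\to f$ uniformly, set $h_n:=\chi g_n$. Then $h_n\in\calD_c$ (using that $\calD\cap L^\infty$ is an algebra, a standard consequence of the Markovian property), $h_n\to \chi f=f$ uniformly, and $\supp h_n\subseteq \overline U$ for all $n$, so $\|h_n-f\|_{L^2(X,\mu)}^2\leq \|h_n-f\|_\infty^2\,\mu(\overline U)\to 0$.

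For closedness of $J_1$, let $(u_n)$ in $\dom J$ satisfy $u_n\to u$ in $(\calD,\calE_1^{1/2})$ and $Ju_n\to v$ in $L^2(X,\mu)$. Standard regular-Dirichlet-form theory (see \cite{Fukushima}) ensures that $\calE_1^{1/2}$-convergence admits a subsequence of quasi-continuous representatives converging quasi-uniformly, hence quasi-everywhere. Since $\mu$ charges no set of zero capacity, that subsequence $(u_{n_k})$ converges to $u$ $\mu$-almost everywhere. Extracting a further subsequence so that $Ju_{n_k}\to v$ $\mu$-a.e.\ and using $Ju_{n_k}=u_{n_k}$ $\mu$-a.e., we obtain $u=v$ $\mu$-a.e. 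Hence $u\in L^2(X,\mu)$, so $u\in\dom J$ with $Ju=v$.

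The main obstacle I foresee is the denseness-of-range step, where the uniform approximation provided by regularity must be upgraded so that the approximants share a common compact support; the cutoff by $\chi$ combined with the algebra property of $\calD\cap L^\infty$ resolves this. The closedness step, by contrast, is essentially an application of the compatibility between the capacity, quasi-continuous representatives, and the hypothesis that $\mu$ does not charge polar sets.
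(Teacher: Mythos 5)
Your proof is correct and follows the paper's approach on all three points, but with one substantive refinement in the dense-range step. The paper asserts directly that uniform density of $C_c(X)\cap\calD$ in $C_c(X)$ gives $L^2(X,\mu)$-density; as stated this is a non sequitur unless the uniform approximants have uniformly bounded supports, and the paper is tacitly invoking the support-controlled form of regularity (the standard fact, cf.\ \cite[Lemma 1.4.2]{Fukushima}, which the authors do cite explicitly later in Proposition~\ref{prop:checkcale_regular}). You reconstruct this support control from scratch via a cutoff $\chi\in\calD_c$ and the algebra property of $\calD\cap L^\infty$, which makes the argument self-contained at the cost of a little extra machinery. The densely-defined step and the closedness step match the paper's argument: regularity plus the Radon property give $C_c(X)\cap\calD\subseteq\dom J$ and $\calE_1^{1/2}$-density (which also yields $\calH$-density since $\calE_1^{1/2}$ dominates the $\calH$-norm), and closedness follows from q.e.\ convergence of a subsequence of quasi-continuous representatives combined with the hypothesis that $\mu$ charges no polar set. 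Your extra step of extracting a further subsequence so that $Ju_{n_k}\to v$ pointwise $\mu$-a.e.\ is a detail the paper elides but is indeed needed to conclude $u=v$ $\mu$-a.e.
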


\begin{proof}
  Clearly, $C_c(X)\cap\calD\subseteq \dom J$. Since $\calE$ is regular, $\dom J$ dense in $(\calD,\calE_1)$ and since $\calE$ is densely defined it is also dense in $\calH$.

  Since $\calE$ is regular, $C_c(X)\cap \calD$ is dense in $C_c(X)$ (with respect to the uniform norm), which itself is dense in $L^2(X,\mu)$. Hence, it is also dense in $L^2(X,\mu)$. Since it is a subspace of $\ran J$, $J$ has dense range.

  Let $(u_n)$ in $\dom J$, $u\in\calD$ and $v\in L^2(X,\mu)$ such that $\lim_{n\to\infty}\calE_1[u_n-u]=0$ and $Ju_n\to v$.
  By \cite[Theorem 2.1.4]{Fukushima} there exists a subsequence $(u_{n_k})$ such that $u_{n_k}\to u$ q.e.\ and hence also $\mu$-a.e. Hence, $v=u$ $\mu$-a.e. and therefore $u\in \dom J$ and $Ju = u = v$.
\end{proof}

Thus, we can construct the trace of $\calE$ w.r.t.\ to $J$ as in Theorem \ref{construction}, which we still denote by $\check{\calE}$.

\begin{theo}
  The trace form $\check{\calE}$ is a Dirichlet form.
\end{theo}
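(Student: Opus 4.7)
My strategy will be to first show that each approximating form $\check\calE_\lambda$ ($\lambda > 0$) is itself a Dirichlet form on $L^2(X,\mu)$, and then transfer the Markov property to the limit $\check\calE$ via the strong resolvent convergence supplied by Theorem \ref{construction}.

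For the first step, closedness, positivity and density of the domain of $\check\calE_\lambda$ are already contained in Theorem \ref{construction} and Lemma \ref{lem:properties_of_J}, so only the Markov property remains. Fixing $\lambda > 0$, I would let $T(s) := (0 \vee s) \wedge 1$ be the unit contraction and take $\psi \in \ran J = \dom \check\calE_\lambda$. For any $u \in \dom J$ with $Ju = \psi$, the Dirichlet property of $\calE$ gives $Tu \in \calD$ with $\calE[Tu] \leq \calE[u]$; since $|Tu| \leq |u|$ pointwise, also $\|Tu\|^2 \leq \|u\|^2$ and $Tu \in L^2(X,\mu)$, so $Tu \in \dom J$ and $\calE_\lambda[Tu] \leq \calE_\lambda[u]$. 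Because $J$ returns the quasi-continuous representative and $T$ is continuous, one has $J(Tu) = T(Ju) = T\psi$ $\mu$-a.e. The Dirichlet principle (Theorem \ref{thm:Dirichlet_principle}) then yields
\[
  \check\calE_\lambda[T\psi] \leq \calE_\lambda[Tu] \leq \calE_\lambda[u],
\]
and taking the infimum over admissible $u$ gives $\check\calE_\lambda[T\psi] \leq \check\calE_\lambda[\psi]$; hence $\check\calE_\lambda$ is Markovian and therefore a Dirichlet form.

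For the second step, the Markov property of each $\check\calE_\lambda$ is equivalent to sub-Markovianity of the resolvent $(\check H_\lambda + 1)^{-1}$, i.e.\ positivity-preservation together with contractivity on $L^\infty(X,\mu)$. Theorem \ref{construction} gives strong $L^2(X,\mu)$-convergence $(\check H_\lambda + 1)^{-1} \to (\check H + 1)^{-1}$ as $\lambda \downarrow 0$, and passing to a $\mu$-a.e.\ convergent subsequence allows both sub-Markovian properties to be inherited by the limit. Thus $(\check H + 1)^{-1}$ is sub-Markovian, which is equivalent to the closed, densely defined, positive form $\check\calE$ being Markovian, so $\check\calE$ is a Dirichlet form.

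The main subtle point is the compatibility $J(Tu) = T(Ju)$, which rests on the composition of a continuous function with a quasi-continuous function being again quasi-continuous, combined with the standing assumption that $\mu$ charges no set of zero capacity (so that quasi-everywhere equality forces $\mu$-a.e.\ equality). Everything else reduces to the Dirichlet principle and the standard fact that sub-Markovianity of symmetric resolvents is preserved under strong (hence, along a subsequence, pointwise a.e.) limits.
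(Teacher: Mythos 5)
Your first step — showing each $\check\calE_\lambda$ is a Dirichlet form via the unit contraction, $J(Tu) = T(Ju)$ for quasi-continuous $u$, and the Dirichlet principle — is exactly the paper's argument.

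There is, however, a genuine gap in the second step. You claim that sub-Markovianity of the single resolvent $(\check H + 1)^{-1}$ is \emph{equivalent} to $\check\calE$ being Markovian. This is false: for a closed, densely defined, positive symmetric form one needs sub-Markovianity of $\alpha(\check H + \alpha)^{-1}$ for \emph{every} $\alpha > 0$ (equivalently, of the semigroup), as in \cite[Theorem 1.4.1]{Fukushima}, which is what the paper invokes. Sub-Markovianity at a single $\alpha$ does not propagate to all $\alpha$. A concrete counterexample on $\calHaux = \ell^2(\{1,2,3\})$: take
\[
A = \begin{pmatrix} 4 & 1 & -3 \\ 1 & 4 & -3 \\ -3 & -3 & 4 \end{pmatrix},
\]
which is positive definite; one computes
\[
(A+I)^{-1} = \frac{1}{48}\begin{pmatrix} 16 & 4 & 12 \\ 4 & 16 & 12 \\ 12 & 12 & 24 \end{pmatrix},
\]
all entries nonnegative and all row sums $\leq 1$, so $(A+I)^{-1}$ is sub-Markovian. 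Yet the form $u \mapsto u^{\top}Au$ is not a Dirichlet form: since $a_{12}=1>0$, the unit contraction fails (e.g.\ $u=(1,-\epsilon,0)$ for small $\epsilon>0$ gives $\calE[(0\vee u)\wedge 1] = 4 > 4-2\epsilon+4\epsilon^2 = \calE[u]$), and indeed $(A+\alpha I)^{-1}$ develops a negative off-diagonal entry as $\alpha\to\infty$. The fix to your argument is routine: strong resolvent convergence of $\check H_\lambda\to\check H$ at $\alpha=1$ extends to every $\alpha>0$, each $\alpha(\check H_\lambda+\alpha)^{-1}$ is sub-Markovian since $\check\calE_\lambda$ is Dirichlet, and sub-Markovianity passes to the strong limit (the cone and the order interval $[0,1]$ are closed convex sets). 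Once stated for all $\alpha>0$, your argument agrees with the paper's.

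Two smaller remarks: the passage to a $\mu$-a.e.\ convergent subsequence is unnecessary, since both positivity preservation and $L^\infty$-contractivity are closed under strong $L^2$-limits without it; and your closing sentence repeats the same one-$\alpha$ equivalence, which should likewise be stated for all $\alpha$.
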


\begin{proof}
  We first show that $\check{\calE}_\lambda$ is a Dirichlet form for every $\lambda>0$. We already know that $\check{\calE}_\lambda$ is densely defined and closed. Thus, to prove that it is in fact a Dirichlet form it remains to show that the unit contraction operates on $\check{\calE}_\lambda$.
  Let $u\in \dom J$. Then $(0\vee u)\wedge 1\in \calD\cap L^2(X,\mu)=\dom J$ and $(0\vee Ju)\wedge 1=J\bigl((0\vee u)\wedge 1\bigr)\in \ran J=\dom \check{\calE}_\lambda$.
  Furthermore, using the Dirichlet principle in Theorem \ref{thm:Dirichlet_principle} together with the fact that $\ccalE_\lambda$ is a Dirichlet form, we obtain
  \begin{align*}
    \check{\calE}_\lambda[(0\vee Ju)\wedge 1] & = \inf\{ \calE_\lambda[v]:\; v\in \dom J,\, Jv= J\bigl((0\vee u)\wedge 1\bigr)\} \\
    & \leq \inf\{ \calE_\lambda[(0\vee v)\wedge 1]:\; v\in \dom J,\, Jv= Ju\} \\
    & \leq \inf\{ \calE_\lambda[v]:\; v\in \dom J,\, Jv= Ju\} = \check{\calE}_\lambda[Ju].
  \end{align*}
  Thus $\check{\calE}_\lambda$ is a Dirichlet form.

  Note that $\check{\calE}$ is densely defined.
  According to \cite[Theorem 1.4.1]{Fukushima}, proving that $\check{\calE}$ is a Dirichlet form is equivalent to prove that the operator $\alpha(\check{H}+\alpha)^{-1}$ is Markovian for every $\alpha>0$.
  Let $\psi\in L^2(X,\mu)$ such that $0\leq \psi\leq 1$ $\mu$-a.e. Owing to the fact that $\check{\calE}_\lambda$ is a Dirichlet form for every $\lambda>0$, for every $\alpha>0$ we have
  \[
    0\leq \alpha(\check{H_\lambda} + \alpha)^{-1}\psi\leq 1 \quad \text{$\mu$-a.e.}
  \]
  Since $\alpha(\check{H_\lambda} + \alpha)^{-1}\to \alpha(\check{H} + \alpha)^{-1}$ strongly, also $\alpha(\check{H} + \alpha)^{-1}$ is Markovian.
\end{proof}

Let $F$ be the topological support of the measure $\mu$. If we consider $\check{\calE}$ as a Dirichlet form in $L^2(F,\mu)$ we can get more information on it.

\begin{prop}
\label{prop:checkcale_regular}
  The Dirichlet form $\check{\calE}$ considered in $L^2(F,\mu)$ is regular.
\end{prop}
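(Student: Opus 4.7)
My plan is to exhibit the set $\mathcal{C} := \{u|_F :\; u\in \calD\cap C_c(X)\}$ as a core for $\ccalE$ lying in $C_c(F)$, which then yields regularity of $\ccalE$ in $L^2(F,\mu)$. The three things to verify are: (i) $\mathcal{C}\subseteq \dom\ccalE\cap C_c(F)$; (ii) $\mathcal{C}$ is uniformly dense in $C_c(F)$; (iii) $\mathcal{C}$ is dense in $(\dom\ccalE,\ccalE_1^{1/2})$.

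For (i), if $u\in\calD\cap C_c(X)$ then $u|_F$ is continuous on $F$ with support contained in the compact set $F\cap\supp u$, so $u|_F\in C_c(F)$; since $\mu$ is Radon and $u$ is bounded with compact support, $u\in L^2(X,\mu)$, whence $u\in\dom J$ and $Ju = u|_F\in \ran J\subseteq \dom\ccalE$.

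For (ii), given $f\in C_c(F)$ I would first extend $f$ to some $\tilde f\in C_c(X)$: since $F$ is closed in the locally compact separable metric space $X$ and $\supp f$ is compact in $X$, Tietze's extension theorem combined with a cut-off function equal to $1$ on a neighbourhood of $\supp f$ produces such an $\tilde f$. Regularity of $\calE$ then furnishes a sequence $(u_n)$ in $\calD\cap C_c(X)$ with $u_n\to\tilde f$ uniformly on $X$, whence $u_n|_F\to f$ uniformly on $F$.

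For (iii), since $\ran J$ is dense in $\dom\ccalE$ by construction, it suffices to approximate each $Ju$ with $u\in\dom J$ by elements of $\mathcal{C}$. Regularity of $\calE$ provides $v_n\in \calD\cap C_c(X)$ with $\calE_1[u-v_n]\to 0$; applying the Dirichlet principle of Theorem \ref{thm:Dirichlet_principle} to $\ccalE_1$ (after identifying $\ccalE_1$ with the trace of $\calE_1$ via Proposition \ref{consistency}) gives
\[
  \ccalE_1[Ju-Jv_n] = \ccalE_1[J(u-v_n)] \leq \calE_1[u-v_n] \to 0,
\]
which, since $\|\cdot\|_{\aux}^2\leq \ccalE_1$, also forces $Jv_n\to Ju$ in $L^2(F,\mu)$. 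The step I expect to require the most care is (ii), specifically producing the compactly supported extension $\tilde f$ and checking that the uniform approximation on $X$ coming from regularity of $\calE$ restricts to a uniform approximation on $F$; once (ii) is settled, the rest rests on the Dirichlet-principle inequality already in hand.
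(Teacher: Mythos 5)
Your three-step outline mirrors the paper's strategy, and steps (i) and (ii) are sound; step (iii), however, rests on an unjustified inequality. After establishing $\ccalE_1[J(u-v_n)]\leq\calE_1[u-v_n]\to 0$ you invoke ``$\|\cdot\|_{\aux}^2\leq\ccalE_1$'' to conclude that $Jv_n\to Ju$ in $L^2(F,\mu)$. That inequality asserts $\check{H}_1\geq 1$, i.e.\ $J_1J_1^*\leq 1$, i.e.\ $\|J_1\|\leq 1$, which is not given and is false in general: for $X=\R$, $m$ Lebesgue, $\calE$ the classical Dirichlet form and $\mu=a\delta_0$ with $a>2$, the trace form at level $1$ satisfies $\ccalE_1[\psi]=2|\psi|^2<a|\psi|^2=\|\psi\|_{\aux}^2$. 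Even a weaker bound $\|\cdot\|_{\aux}^2\leq C\,\ccalE_1$ would amount to boundedness of $J_1$, which is not assumed in Section \ref{sec:Traces_Dirichlet_forms}: Lemma \ref{lem:properties_of_J} gives only closedness, and the example $\mu=\sum_n a_n\delta_n$ with unbounded $(a_n)$ in Section \ref{sec:examples} has $J_1$ unbounded. Consequently, from $\calE_1[u-v_n]\to 0$ alone one cannot deduce $\|Ju-Jv_n\|_{\aux}^2\to 0$, and the convergence you need --- $\bigl(\ccalE\bigr)_1[Ju-Jv_n]\to 0$, which includes the $L^2(F,\mu)$ term --- does not follow.

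The paper repairs exactly this point by approximating not with respect to $\calE_1$ but with respect to the \emph{auxiliary Dirichlet form} $\calE^J$ with domain $L^2(X,\mu)\cap\calD$ and $\calE^J[w]=\calE[w]+\int_X w^2\,d\mu$, whose regularity is guaranteed by \cite[Theorem 6.1.2]{Fukushima}. This yields $v_n\in C_c(X)\cap\calD$ with $\calE^J_1[u-v_n]\to 0$, which simultaneously controls $\calE_1[u-v_n]$ \emph{and} $\int_X(u-v_n)^2\,d\mu=\|Ju-Jv_n\|_{\aux}^2$. With that replacement your step (iii) goes through: $\bigl(\ccalE_1\bigr)_1[Ju-Jv_n]\to 0$, hence $\bigl(\ccalE\bigr)_1[Ju-Jv_n]\to 0$ since $\ccalE\leq\ccalE_1$, and the rest of the argument stands. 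The paper organises this in two stages (first show each $\ccalE_\lambda$, $\lambda>0$, is regular using $\calE^J$, then pass to $\ccalE$ via the monotonicity $\bigl(\ccalE\bigr)_1\leq\bigl(\ccalE_1\bigr)_1$), but the underlying estimate is the same as the one you need.
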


\begin{proof}
  We first show that $\check{\calE}_\lambda$ is regular for every $\lambda>0$.
  Let $\lambda>0$, $\psi\in C_c(F)$. By Tietze's extension theorem, the function $\psi$ has an extension $\tilde{\psi}\in C_c(X)$.
  Since $\calE$ is regular, by \cite[Lemma 1.4.2-ii, p.29]{Fukushima} there is a sequence $(u_k)$ in $C_c(X)\cap\calD$ such that
  $\supp(u_k)\subseteq\supp(\tilde{\psi})$ for all $k\in\N$ and $\|u_k-\tilde{\psi}\|_\infty\to 0$.
  Hence, $(Ju_k)$ in $C_c(F)\cap \ran J$ and $Ju_k \to \psi$ uniformly on $F$.
  Now let $\psi\in\ran J$. Then there exists $u\in L^2(X,\mu)\cap\calD$ such that $\psi=Ju$.
  The regularity of $\calE$ and the fact that $\mu$ is a Radon measure yield the regularity of the Dirichlet form $\calE^J$ on $L^2(X,m)$ defined by
  \[
    \dom \calE^J := L^2(X,\mu)\cap\calD,\quad \calE^J[u] := \calE[u] + \int_X u^2\,d\mu \quad\text{for all }u\in \dom \calE^J,
  \]
  see \cite[Theorem 6.1.2]{Fukushima}.
  Thus, there exists a sequence $(u_k)$ in $C_c(X)\cap\calD$ such that $\calE_\lambda^J[u_k-u]\to 0$.
  Therefore, $(Ju_k)$ in $C_c(F)\cap\ran J$ and $Ju_k\to Ju$ in $L^2(F,\mu)$. By construction of $\check{\calE}_\lambda$ we obtain
  \[
    \check{\calE}_\lambda[Ju_k - \psi] = \check{\calE}_\lambda[Ju_k - Ju] = \calE_\lambda[P_\lambda u_k - P_\lambda u] \leq \calE_\lambda[u_k - u] \leq \calE^J_\lambda [u_k-u] \to 0.
  \]
  Hence, $\check{\calE}_\lambda$ is regular.

  Let us now prove the regularity of $\check{\calE}$.
  As $\ran J\subseteq \check{\calD}$, by the first part of the proof we get that $C_c(F)\cap\check{\calD}$ is uniformly dense in $C_c(F)$.
  Note that $\ran J$ is a core for $\ccalE$. Thus, it suffices to prove that $C_c(F) \cap \ran J$ is a core for $\ran J$. Let $\psi\in\ran J$. Since $\ccalE_1$ is regular, there exists a sequence $(\psi_k)_{k\in\N}$ in $C_c(F)\cap \ran J$ such that
  $\bigl(\ccalE_1\bigr)_1[\psi_k-\psi] \to 0$. Therefore,
  \[\bigl(\ccalE\bigr)_1[\psi_k-\psi] \leq \bigl(\ccalE_1\bigr)_1[\psi_k-\psi] \to 0.\]
  Hence, $\check{\calE}$ is regular.
\end{proof}

Next, we will establish a formula for $\check{H}_1^{-1}$ in terms of the $1$-potential.

\begin{lem}
  Assume that $J_1$ is bounded.
  Then for every $\psi\in L^2(X,\mu)$, the signed measure $\psi\mu$ has finite energy integral. Let $U_1^\mu\psi$ be the $1$-potential of the signed measure $\psi\mu$. Then
  \[
    \check{H}_1^{-1}\psi=J_1U_1^\mu\psi.
  \]
\label{potential}
\end{lem}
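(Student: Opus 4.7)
The plan is to invoke the representation \eqref{checkH} at $\lambda=1$, which gives $\check{H}_1^{-1} = J_1 J_1^{*}$, and then identify $J_1^{*}\psi$ with the $1$-potential $U_1^\mu\psi$ by comparing two Riesz-type variational characterizations.

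First, I would verify that $\psi\mu$ has finite energy integral. For $v\in \calD\cap C_c(X)$, compact support together with local finiteness of $\mu$ gives $v\in L^2(X,\mu)$, so $v\in \dom J$. Boundedness of $J_1$ then yields
\[
  \Bigl|\int v\,d(\psi\mu)\Bigr| = |(\psi,Jv)_\mu|\leq \|\psi\|_{L^2(X,\mu)}\,\|J_1\|\,\calE_1[v]^{1/2},
\]
so $\psi\mu$ meets the standard finite-energy criterion and $U_1^\mu\psi\in\calD$ is well-defined by
\[
  \calE_1(U_1^\mu\psi,v) = \int \tilde v\,d(\psi\mu) \quad\text{for all }v\in\calD.
\]

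Next, since $J_1$ is bounded, the adjoint $J_1^{*}\from L^2(X,\mu)\to(\calD,\calE_1^{1/2})$ is everywhere defined, and by Lemma \ref{lem:properties_of_J} we have \eqref{checkH}, so $\check{H}_1^{-1}=J_1 J_1^{*}$; in particular $J_1^{*}\psi\in\dom J$ for every $\psi$. By definition of the adjoint, for every $v\in\dom J$
\[
  \calE_1(J_1^{*}\psi,v) = (\psi,Jv)_\mu = \int v\psi\,d\mu = \int \tilde v\,d(\psi\mu),
\]
where in the last equality we use that elements of $\dom J$ are already quasi-continuous. Since $\calE$ is regular, $\calD\cap C_c(X)\subseteq \dom J$ is a core for $\calE_1$, and the right-hand side is $\calE_1^{1/2}$-continuous in $v$ (again by the finite-energy estimate), so this identity extends to all $v\in\calD$.

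Comparing with the defining relation for $U_1^\mu\psi$ and applying uniqueness of the Riesz representative in the Hilbert space $(\calD,\calE_1^{1/2})$, we obtain $J_1^{*}\psi = U_1^\mu\psi$. Applying $J_1$ then yields
\[
  \check{H}_1^{-1}\psi = J_1 J_1^{*}\psi = J_1 U_1^\mu\psi.
\]
The only delicate point is the extension step: one must make sure the variational identity proved on $\dom J$ actually characterises $U_1^\mu\psi$ on all of $\calD$. This is where regularity of $\calE$ (giving $\calD\cap C_c(X)$ as a core) and the finite-energy bound jointly do the work; beyond that the argument is bookkeeping.
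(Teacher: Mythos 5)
Your argument is correct but routes through the operator identity \eqref{checkH}, $\check{H}_1^{-1}=J_1 J_1^*$, whereas the paper never invokes \eqref{checkH} here and works directly at the level of the form $\ccalE_1$: it computes, for $v\in\dom J$,
\[
\ccalE_1(J_1 U_1^\mu\psi, Jv) = \calE_1(U_1^\mu\psi, P_1 v) = \int J P_1 v\cdot\psi\,d\mu = \int Jv\cdot\psi\,d\mu = (\psi, Jv)_\mu,
\]
using the defining relation for the $1$-potential together with $J P_1 v = Jv$ from Lemma \ref{lem:projection}, and then reads off $J_1 U_1^\mu\psi\in\dom\check{H}_1$ with $\check{H}_1 J_1 U_1^\mu\psi = \psi$. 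Your version instead isolates the clean intermediate fact $J_1^*\psi = U_1^\mu\psi$ — the $1$-potential map is precisely the adjoint of the trace map — which is a nicer statement to have explicitly; the paper's computation is marginally more elementary in that it uses only the definition \eqref{construction1} of $\ccalE_1$ and Remark \ref{rem:construction}(c), not the formula for $\check{H}_1^{-1}$. Both arguments ultimately rest on the same pairing $\calE_1(\cdot,v)=\int v\psi\,d\mu$.

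One minor point: the ``delicate extension step'' you single out at the end is in fact vacuous. Since $J_1$ is bounded by hypothesis and closed by Lemma \ref{lem:properties_of_J}, its domain $\dom J=\calD\cap L^2(X,\mu)$ is simultaneously closed and dense in $(\calD,\calE_1^{1/2})$, so $\dom J=\calD$. The adjoint identity $\calE_1(J_1^*\psi,v)=(\psi,J_1v)_\aux$ therefore already holds for all $v\in\calD$, and the comparison with the characterization of $U_1^\mu\psi$ is immediate; no passage through the core $\calD\cap C_c(X)$ is required.
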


\begin{proof}
  Let us first observe that for every fixed $\psi\in L^2(X,\mu)$ the signed measure $\psi\mu$ has finite energy integral,
  i.e.\ there exists $c\geq 0$ such that
  \[
  \int |Jv \cdot \psi|\,d\mu \leq c(\calE_1[v])^{1/2} \quad\text{for all } v\in\calD.
  \]
  Thus, the $1$-potential of $\psi\mu$ is well-defined and is characterized as being the unique element from $\calD$ such that
  \[
    \calE_1(U_1^\mu\psi,v) = \int Jv\cdot\psi\,d\mu \quad\text{for all } v\in\calD.
  \]
  Hence, making use of the construction of $\check{\calE}_1$ together with the latter identity we obtain
  \begin{align*}
    \ccalE_1(J_1U_1^\mu\psi,Jv) & = \calE_1(U_1^\mu \psi,P_1v) = \int JP_1v\cdot\psi\,d\mu
    = \int Jv\cdot\psi\,d\mu \\
    & = (\psi,Jv)_{L^2(X,\mu)}\quad\text{for all } v\in\calD,\ \psi\in L^2(X,\mu).
  \end{align*}
  Thus $J_1U_1^\mu\psi\in D(\check{H}_1)$ and $\check{H}_1 J_1U_1^\mu\psi = \psi$.
\end{proof}

We end this section by showing that our construction of the trace of a Dirichlet form coincides with the construction in \cite[Section 6.2]{Fukushima}.
To this end, let
\begin{align*}
  \calD_\e & := \{u\colon X\to \R\cup\{\pm\infty\}:\; u\,\text{measurable},\, |u|<\infty \,\text{$m$-a.e.},\\
  & \qquad\qquad\qquad\qquad\qquad\qquad \exists (u_n) \text{ in } \calD: \calE[u_n-u_m]\to 0,\, u_n\to u\,\text{$m$-a.e.}\}.
\end{align*}
Clearly, $\calD_\e$ is a vector space containing $\calD$, and by \cite[Theorem 1.5.1]{Fukushima} we can extend $\calE$ to $\calD_\e$ by
\[\calE[u]:=\lim_{n\to\infty} \calE[u_n]\]
for $u\in\calD_\e$, where $(u_n)$ is a corresponding approximating sequence. By \cite[Theorem 2.1.7]{Fukushima}, every element in $\calD_\e$ admits a quasi-continuous representative,
so without loss of generality we may assume that the elements of $\calD_\e$ are quasi-continuous.
Note that $\calE$ is a positive quadratic form on $\calD_\e$,
but $(\calD_\e,\calE)$ may not be a Hilbert space.
However, if $\calE$ is a scalar product on $\calD$, then $(\calD_\e,\calE)$ is a Hilbert space (this is the so-called \emph{transient} case) and $\calD_\e$ can be identified with the abstract completion of $\calD$ w.r.t.\ $\calE$.
We can decompose $\calD_\e$ into an $\calE$-orthogonal sum
\[
  \calD_\e = \calD_{\e,X\setminus\tilde{F}}\oplus\mathcal{H}_{\tilde{F}}:= \{u\in\calD_\e:\; u=0\,\text{q.e.\ on } \tilde{F}\} \oplus\{P_{\tilde{F}} u:\; u\in\calD_\e\},
\]
where $\tilde{F}$ is a so-called quasi-support of $\mu$ and $P_{\tilde{F}}$ is given by a probabilistic expectation
\[P_{\tilde{F}} u = \E_{(\cdot)}\bigl(u(X_{\sigma_{\tilde{F}}})\bigr)\]
for $u\in \calD_\e$; cf.\ \cite[Section 6.2]{Fukushima} for details.
In case $\calE$ is a scalar product on $\calD$ we obtain that $P_{\tilde{F}}$ is an orthogonal projection on $\calD_\e$ w.r.t.\ $\calE$.

We define the form $q$ in $L^2(F,\mu)$ by
\begin{align*}
  \dom q & := \{\varphi\in L^2(F,\mu):\; \exists u\in \calD_\e: u=\varphi\,\text{$\mu$-a.e.}\},\\
  q[\varphi] & := \calE[P_{\tilde{F}}u],\quad\text{where $u\in\calD_\e$ with $u=\varphi$ $\mu$-a.e.}
\end{align*}
By \cite[Lemma 6.2.1]{Fukushima}, $q$ is well-defined. Note that for $\varphi\in \dom q$ we have $\varphi = P_{\tilde{F}} u$ $\mu$-a.e.
By \cite[Theorem 6.2.1]{Fukushima}, $q$ is a regular Dirichlet form, so in particular $q$ is closed. Moreover, $J(C_c(X)\cap\calD)\subseteq \ran J$ is a core for $q$.

\begin{prop}
  We have $\check{\calE} = q$.
\end{prop}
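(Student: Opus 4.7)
The plan is to establish $\check\calE = q$ as closed Dirichlet forms on $L^2(F,\mu)$ by showing that the two forms agree on the common core $\ran J$. The inclusion $\ran J \subseteq \dom\check\calE$ holds by construction (see the paragraph after Theorem \ref{construction}), and $\ran J$ is a core for $\check\calE$ since $\dom\check\calE$ is the closure of $\ran J$ with respect to $\sqrt{\check\calE[\cdot]+\|\cdot\|_\aux^2}$. The corresponding fact for $q$ is stated just before the proposition, as $J(C_c(X)\cap\calD)\subseteq \ran J$ is a core. Once pointwise agreement on $\ran J$ is proved, the two closed extensions must coincide.

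The main task is therefore to verify $\check\calE_0[Ju]=q[Ju]$ for each $u\in\dom J$, and then to lift this identity to $\check\calE = q$ via the regular-part decomposition. For the first identity I would invoke Lemma \ref{Infimum}:
\[
  \check\calE_0[Ju]=\inf\{\calE[v]:\;v\in\dom J,\ Jv=Ju\}.
\]
The constraint $v\in\dom J=\calD\cap L^2(X,\mu)$ with $Jv=Ju$ simplifies: since $u\in L^2(X,\mu)$ and $v=u$ $\mu$-a.e., the $L^2(X,\mu)$-integrability of $v$ is automatic, so the constraint reduces to $v\in\calD$; and since $\tilde{F}$ is a quasi-support of $\mu$ and the elements of $\calD$ are taken quasi-continuous, $\mu$-a.e.\ equality is equivalent to q.e.\ equality on $\tilde{F}$ (a standard property of quasi-supports, cf.\ \cite[Theorem 4.6.3]{Fukushima}). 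Hence the infimum may be taken over $\{v\in\calD:\; v=u \text{ q.e.\ on } \tilde{F}\}$.

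To match this infimum with $\calE[P_{\tilde{F}}u]=q[Ju]$ I would argue with two inequalities. The lower bound $\calE[v]\ge \calE[P_{\tilde{F}}u]$ for admissible $v$ follows from the $\calE$-orthogonal decomposition $\calD_\e = \calD_{\e,X\setminus\tilde{F}}\oplus\mathcal{H}_{\tilde{F}}$ recalled in the excerpt: if $v\in\calD$ satisfies $v=u$ q.e.\ on $\tilde{F}$, then $v-P_{\tilde{F}}u\in\calD_{\e,X\setminus\tilde{F}}$, whence $P_{\tilde{F}}v = P_{\tilde{F}}u$ and $\calE[v]=\calE[P_{\tilde{F}}u]+\calE[v-P_{\tilde{F}}u]\ge \calE[P_{\tilde{F}}u]$. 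For the matching upper bound I would use that $\calD_{X\setminus\tilde{F}}:=\{w\in\calD:\; w=0 \text{ q.e.\ on }\tilde{F}\}$ is $\calE$-dense in $\calD_{\e,X\setminus\tilde{F}}$ (by the very definition of the extended Dirichlet space of the part form on $X\setminus\tilde{F}$); applied to $u-P_{\tilde{F}}u\in \calD_{\e,X\setminus\tilde{F}}$, this produces a sequence $w_n\in \calD_{X\setminus\tilde{F}}$ with $\calE[w_n-(u-P_{\tilde{F}}u)]\to 0$, so that $v_n:=u-w_n\in\calD$ satisfies $v_n=u$ q.e.\ on $\tilde{F}$ and $\calE[v_n]\to\calE[P_{\tilde{F}}u]$.

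The final step is then routine. The restriction $q|_{\ran J}$ is densely defined (since $\ran J$ is dense in $L^2(F,\mu)$ by Lemma \ref{lem:properties_of_J}), closable as a restriction of the closed form $q$, and equal to $\check\calE_0$ on $\ran J$; by the maximality characterization of the regular part we obtain $q|_{\ran J}=(\check\calE_0)_\reg$, and the reverse inequality $(\check\calE_0)_\reg\leq q|_{\ran J}$ is automatic from $(\check\calE_0)_\reg\leq\check\calE_0$. Taking closures, $\check\calE=\overline{(\check\calE_0)_\reg}=\overline{q|_{\ran J}}=q$, the last equality because $\ran J$ is a core for $q$. The genuine obstacle is the $\calE$-density of $\calD_{X\setminus\tilde{F}}$ in $\calD_{\e,X\setminus\tilde{F}}$ used in the upper bound, which draws on Fukushima's extended Dirichlet space theory for the part form; every other step is essentially bookkeeping with the Dirichlet principle and the regular-part machinery.
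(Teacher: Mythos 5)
Your proof is correct, but it takes a genuinely different route from the paper. The paper works at $\lambda>0$: it invokes the probabilistic representation $P_\lambda u = \E_{(\cdot)}\bigl(e^{-\lambda\sigma_{\tilde F}}u(X_{\sigma_{\tilde F}})\bigr)$ from \cite[Theorem 4.3.1]{Fukushima}, uses the tower property $P_{\tilde F}P_\lambda=P_\lambda$ to identify $\calE[P_\lambda u]=q[Ju]$, and then lets $\lambda\downarrow 0$ (handling the extra term $\lambda\int(P_\lambda u)^2\,dm$ by restricting to $u\in\calD\cap C_c(X)$, a common core). You instead stay entirely at $\lambda=0$: you feed Lemma \ref{Infimum} into the $\calE$-orthogonal decomposition of $\calD_\e$, get the lower bound $\calE[v]\ge\calE[P_{\tilde F}u]$ by orthogonality, and get the matching upper bound by $\calE$-approximating $u-P_{\tilde F}u\in\calD_{\e,X\setminus\tilde F}$ from within $\calD_{X\setminus\tilde F}$. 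This establishes $\check\calE_0=q$ on all of $\ran J$ (rather than only on the smaller core), which makes the closure step quite clean, since $\check\calE_0$ is then itself closable and $\check\calE=\overline{\check\calE_0}=\overline{q|_{\ran J}}=q$. What your route buys is that it avoids the probabilistic $\lambda$-potential representation and the $\lambda\to 0$ limit altogether, and it is variational in spirit throughout; what the paper's route buys is that it leans on facts already close at hand from the stochastic-process side of \cite{Fukushima}.

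One point deserves sharpening. You describe the $\calE$-density of $\calD_{X\setminus\tilde{F}}$ in $\calD_{\e,X\setminus\tilde{F}}$ as following ``by the very definition of the extended Dirichlet space of the part form.'' This is really two nontrivial theorems from \cite{Fukushima}, not a definition: first, the identification of $\calD_{\e,X\setminus\tilde{F}}=\{w\in\calD_\e:\; w=0\text{ q.e.\ on }\tilde F\}$ with the extended Dirichlet space of the part form on the quasi-open set $X\setminus\tilde F$ (see around \cite[Theorem 4.4.3]{Fukushima}); and second, the fact that any approximating sequence in the definition of the extended Dirichlet space actually converges in the $\calE$-seminorm to its limit (see around \cite[Theorem 1.5.2]{Fukushima}), which is where the Markov property is actually used. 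Both are available in the framework you are working in, so the argument goes through, but they should be flagged as cited results rather than definitional facts; they play here a role comparable in weight to the probabilistic identities \cite[Theorem 4.3.1]{Fukushima} and the tower property used by the paper.
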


\begin{proof}
  By \cite[Theorems 4.6.2 and 4.6.5]{Fukushima} we observe the $\calE$-orthogonal decomposition
  \[\calD_\e = \{u\in\calD_\e:\; u=0\,\text{$\mu$-a.e.}\}\oplus \{P_{\tilde{F}}u:\; u\in\calD_\e\}.\]


  Let $\lambda>0$. Let $\varphi\in\ran J$, $u\in \dom J$ such that $Ju=\varphi$. Then
  \[\check{\calE}_\lambda[\varphi] = \calE_\lambda [P_\lambda u] = \calE[P_\lambda u] + \lambda \int (P_\lambda u)^2\,dm.\]
  Since $P_\lambda u = \E_{(\cdot)} \bigl(e^{-\lambda \sigma_{\tilde{F}}} u(X_{\sigma_{\tilde{F}}})\bigr)$ by \cite[Theorem 4.3.1]{Fukushima}, we obtain
  $P_\lambda u \to P_{\tilde{F}} u$ $m$-a.e.
  For $u\in \calD\cap C_c(X)$ we obtain
  \[\int |P_\lambda u|^2\, dm \leq \int P_{\tilde{F}} u^2 \, dm \leq \|u\|_\infty^2 m(\supp u).\]
  Since $P_\lambda u = u$ q.e.\ on $\tilde{F}$ by \cite[Theorem 4.3.1]{Fukushima}, we have $P_\lambda u = u = Ju$ $\mu$-a.e.
  Hence, $q[Ju] = \calE[P_{\tilde{F}} P_\lambda u] = \calE[P_\lambda u]$, since $P_{\tilde{F}}P_\lambda = P_\lambda$ by the tower property for conditional expectations.
  Therefore, for $\varphi\in J\bigl(\calD\cap C_c(X)\bigr)$ and $u\in \calD\cap C_c(X)$ such that $Ju=\varphi$ we obtain
  \[\check{\calE}_\lambda[\varphi] = \calE[P_\lambda u] + \lambda \int (P_\lambda u)^2\,dm = q[\varphi] + \lambda \int (P_\lambda u)^2\,dm \to q[\varphi].\]
  Hence, $\check{\calE}[\varphi] = \check{\calE}_0[\varphi] = q[\varphi]$.
  Since $J\bigl(\calD\cap C_c(X)\bigr)$ is a core for $q$ by \cite[Theorem 6.2.1]{Fukushima} and it is a core for $\check{\calE}$ by (the proof of) Proposition \ref{prop:checkcale_regular},
  we obtain $\check{\calE}= q$.
\end{proof}

\section{Convergence of traces of Dirichlet forms}
\label{sec:convergence_Dirichlet_forms}

Let $X$ be a locally compact separable metric space, $m$ a positive Radon measure on $X$ with full support $X$ and $\calE$ a regular Dirichlet form having domain $\calD\subseteq L^2(X,m)$.
Let $\mu$ be a positive Radon measure on $X$ charging no set of zero capacity.
We consider a sequence $(\calEn)$ of regular Dirichlet forms with $\dom \calEn = \calD$ for all $n\in\N$, and a Dirichlet form $\calE^\infty$ with domain $\dom \calE^\infty = \calD$.

We make the following three assumptions. First, assume there exists a constant $c>0$ such that
\begin{equation}
\label{eq:A1}
  c^{-1}\calE[u]\leq \calEn[u]\leq c\calE[u] \quad\text{for all } u\in\calD, n\in\N\cup\{\infty\}. \tag{A.1}
\end{equation}
Assumption \eqref{eq:A1} implies in particular that $\calE$ and $\calEn$ induce equivalent capacities.
Hence we shall use deliberately the abbreviations ``q.e.''\ and ``q.c.''\ to mean with respect to any of these capacities.
The second assumption that we will adopt is
\begin{equation}
\label{eq:A2}
  J_1\from (\calD\cap L^2(X,\mu),\calE_1)\to L^2(X,\mu),\quad u\mapsto u \quad \text{is continuous}. \tag{A.2}
\end{equation}
Note that since $J_1$ is densely defined by Lemma \ref{lem:properties_of_J}, we can then extend $J_1$ to $\calD$.

For $n\in\N\cup\{\infty\}$ we define as before
\[
  \Jn \from (\calD\cap L^2(X,\mu),\calEn_1) \to L^2(X,\mu),\quad  u\mapsto u.
\]
By \eqref{eq:A1} and \eqref{eq:A2} also $\Jn$ is continuous and can be extended to $\calD$.
For the third assumption, for $n\in\N\cup\{\infty\}$ let $H^n$ be the positive self-adjoint operator associated with $\calE^n$ and $K^n:=(H^n+1)^{-1}$.
Then we assume that for all $u\in L^2(X,m)$ we have
\begin{equation}
\label{eq:A3}
  J_1^{n} K^{n} u \to J_1^\infty K^\infty u\quad \text{in } L^2(X,\mu).	\tag{A.3}
\end{equation}
For $n\in\N\cup\{\infty\}$ and $\lambda>0$ we denote by $\check{\calE}^n_\lambda$ the trace of the Dirichlet form $\calEn_\lambda$ w.r.t.\ the measure $\mu$.

Let us recall the definition of Mosco convergence, see \cite[Definition 2.1.1]{Mosco1994} or \cite{Mosco1969}.
Let $(\form_n)$ be a sequence of positive quadratic forms in a Hilbert space $\calH$, $\form_\infty$ a quadratic form in $\calH$.
We say that $(\form_n)$ \emph{Mosco-converges} to the form $\form_\infty$ in $\calH$ provided
\begin{enumerate}
  \item[(M1)]
    for all $(u_n)$ in $\calH$, $u\in \calH$ such that $u_n\to u$ weakly in $\calH$ we have $\liminf_{n\to\infty} \form_n[u_n] \geq \form_\infty[u]$,
  \item[(M2)]
    for all $u\in \calH$ there exists $(u_n)$ in $\calH$ such that $u_n\to u$ in $\calH$ and $\limsup_{n\to\infty} \form_n[u_n]\leq \form_\infty[u]$.
\end{enumerate}
Note that for this definition we extend the quadratic forms to the whole space by setting them $+\infty$ for elements not in their domain.

\begin{theo}
  \label{convtrace}
  Assume \eqref{eq:A1}, \eqref{eq:A2} and \eqref{eq:A3}.
  Let $(\calEn)$ be Mosco-convergent to $\calE^\infty$.
  Then:
  
  {\rm(a)}
      The sequence of trace forms $(\check{\calE}^n_\lambda)$ Mosco-converges to the corresponding trace form $\check{\calE}_\lambda^\infty$ for every $\lambda>0$.
      
  {\rm(b)}
      For every sequence $(\lambda_j)$ in $(0,\infty)$ such that $\lambda_j\downarrow 0$ there exists a sequence $(n_j)$ in $\N$ with $n_j\to\infty$ such that $(\check{\calE}^{n_j}_{\lambda_j})$ Mosco-converges to the trace form $\check{\calE}^\infty$.
\end{theo}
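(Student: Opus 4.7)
The plan is to verify the two Mosco conditions for part~(a) at each fixed $\lambda>0$, and to deduce part~(b) from part~(a) via a diagonal extraction combining the strong resolvent convergence $(\check H^\infty_\lambda+1)^{-1}\to(\check H^\infty+1)^{-1}$ as $\lambda\downarrow 0$ from Theorem~\ref{construction} with the metrizability of the strong operator topology on the unit ball of bounded operators on the separable Hilbert space $L^2(X,\mu)$.

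For the lower-bound condition (M1) in part~(a), I would let $\psi_n\to\psi$ weakly in $L^2(X,\mu)$ with $\liminf_n\ccalE^n_\lambda[\psi_n]<\infty$ (passing to a suitable subsequence). Using Theorem~\ref{thm:Dirichlet_principle} I pick $u_n\in\dom J$ satisfying $u_n=P^n_\lambda u_n$, $Ju_n=\psi_n$ and $\calE^n_\lambda[u_n]=\ccalE^n_\lambda[\psi_n]$. Assumption \eqref{eq:A1} then bounds $\calE_\lambda[u_n]$ uniformly, so along a subsequence $u_n$ converges weakly in $(\calD,\calE_1^{1/2})$ to some $u\in\calD$, and by continuity of the embedding $(\calD,\calE_1)\hookrightarrow L^2(X,m)$ also weakly in $L^2(X,m)$. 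Since \eqref{eq:A2} makes $J$ weak-to-weak continuous into $L^2(X,\mu)$, we get $Ju=\psi$ and $\psi\in\ran J=\dom\ccalE^\infty_\lambda$. Combining (M1) for the Mosco convergence of $(\calE^n)$ with the weak lower semicontinuity of $\|\cdot\|^2_{L^2(X,m)}$ gives $\liminf_n\calE^n_\lambda[u_n]\geq\calE^\infty_\lambda[u]\geq\ccalE^\infty_\lambda[\psi]$, the last inequality being again the Dirichlet principle.

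For the upper-bound condition (M2), given $\psi\in\ran J$ I choose $u\in\dom J$ with $Ju=\psi$ and set $v:=P^\infty_\lambda u$, so that $Jv=\psi$ and $\calE^\infty_\lambda[v]=\ccalE^\infty_\lambda[\psi]$. Since $\dom H^\infty$ is a core for $\calE^\infty_\lambda$, approximate $v$ by $v_k\in\dom H^\infty$ with $\calE^\infty_\lambda[v_k-v]\to 0$, which combined with \eqref{eq:A1} and \eqref{eq:A2} gives $Jv_k\to\psi$ in $L^2(X,\mu)$. For each $k$ set $f_k:=(H^\infty+\lambda)v_k\in L^2(X,m)$ and define the canonical recovery element $u^{n,k}:=(H^n+\lambda)^{-1}f_k$. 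Strong resolvent convergence (equivalent to Mosco convergence of $(\calE^n)$) yields $u^{n,k}\to v_k$ in $L^2(X,m)$ and $\calE^n_\lambda[u^{n,k}]=(f_k,u^{n,k})_{L^2(X,m)}\to\calE^\infty_\lambda[v_k]$ as $n\to\infty$, while the convergence $Ju^{n,k}\to Jv_k$ in $L^2(X,\mu)$ follows from \eqref{eq:A3} for $\lambda=1$ and, for arbitrary $\lambda>0$, from iterating the resolvent identity together with the uniform boundedness of $(JK^n)$ as operators $L^2(X,m)\to L^2(X,\mu)$ guaranteed by \eqref{eq:A1} and \eqref{eq:A2}. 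A standard diagonal extraction then produces $k(n)\to\infty$ such that $\psi_n:=Ju^{n,k(n)}$ converges to $\psi$ in $L^2(X,\mu)$ while, by the Dirichlet principle, $\ccalE^n_\lambda[\psi_n]\leq\calE^n_\lambda[u^{n,k(n)}]\to\ccalE^\infty_\lambda[\psi]$.

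For part~(b), part~(a) recast gives $(\check H^n_{\lambda_j}+1)^{-1}\to(\check H^\infty_{\lambda_j}+1)^{-1}$ strongly in $L^2(X,\mu)$ for each fixed $j$, while Theorem~\ref{construction} applied to $\calE^\infty$ yields $(\check H^\infty_{\lambda_j}+1)^{-1}\to(\check H^\infty+1)^{-1}$ strongly as $\lambda_j\downarrow 0$. Since the operators involved are contractions and $L^2(X,\mu)$ is separable, the strong operator topology is metrizable on the unit ball, so picking $n_j\geq j$ with strong-operator distance between $(\check H^{n_j}_{\lambda_j}+1)^{-1}$ and $(\check H^\infty_{\lambda_j}+1)^{-1}$ at most $2^{-j}$ and triangulating gives the desired strong resolvent convergence, equivalently the Mosco convergence of $(\ccalE^{n_j}_{\lambda_j})$ to $\ccalE^\infty$. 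The main obstacle throughout is the construction of the recovery sequence in (M2), specifically the extension of \eqref{eq:A3} from $\lambda=1$ to arbitrary $\lambda>0$ and the diagonal argument needed to collapse the doubly-indexed family $(u^{n,k})$ into a single recovery sequence indexed by $n$.
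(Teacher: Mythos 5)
Your proof is correct, but it follows a genuinely different route from the paper's. The paper reduces (a) to Mosco convergence of the \emph{bounded} dual forms $Q_n[\psi]=\calE^n_1[(J^n_1)^*\psi]$ (the forms of $(\check H^n_1)^{-1}=J^n_1(J^n_1)^*$), for which (M2) is trivial with the constant recovery sequence $\psi_n:=\psi$; the nontrivial part (M1) is then handled by invoking Kuwae's theory of convergence of varying Hilbert spaces (strong and weak convergence in the sense of Kuwae, together with Kuwae's Lemmas 2.2 and 2.3), after identifying the limit of the weak Kuwae-accumulation point of $(J^n_1)^*\psi_n$. You instead verify (M1) and (M2) directly for $\ccalE^n_\lambda$: in (M1) the Dirichlet principle lifts $\psi_n$ to its $\calE^n_\lambda$-harmonic representative $u_n=P^n_\lambda u_n$, and \eqref{eq:A1}/\eqref{eq:A2} plus weak compactness in $(\calD,\calE_1)$ and weak lower semicontinuity finish the job; in (M2) you transport core approximants of $P^\infty_\lambda u$ to level $n$ via $(H^n+\lambda)^{-1}(H^\infty+\lambda)$, extend \eqref{eq:A3} from $\lambda=1$ to general $\lambda$ by the resolvent identity combined with the uniform bound on $J_1^nK^n$, and diagonalize over the double index. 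This buys you an elementary argument that avoids Kuwae's machinery entirely and makes the passage to general $\lambda$ explicit (a step the paper only says is ``similar''), at the price of the nontrivial double-indexed recovery construction in (M2) which the paper's duality trick circumvents. For (b) both proofs are diagonal extractions; the paper cites Attouch's metrizability of the Mosco topology on closed forms, whereas you ground the same argument in the metrizability of the strong operator topology on the contractions of the separable space $L^2(X,\mu)$ -- equivalent, given that Mosco convergence of positive closed forms is strong resolvent convergence of the associated operators.
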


\begin{proof}
  First, note that $J^\infty_1 u = \Jn u = J_1 u$ for all $u\in\calD$ and $n\in\N$.

  (a)
  We shall prove the statement for $\lambda=1$, the proof for general $\lambda>0$ is similar.
  For $n\in\N\cup\{\infty\}$ we define the bounded form $Q_n$ by
  \[
    \dom Q_n := L^2(X,\mu),\quad Q_n[\psi] := \calEn_1[(\Jn)^*\psi].
  \]
  Since $(\Jn)^*\psi \in \bigl(\ker (\Jn)\bigr)^{\perp_{\calE^n_1}}$, from the very definition we obtain
  \[
    Q_n[\psi] = \check{\calE}^n_1[\Jn (\Jn)^*\psi] = \int_X \psi\cdot \Jn(\Jn)^*\psi\,d\mu.
  \]
  Hence $Q_n$ is the closed quadratic form associated to the positive self-adjoint bounded operator $\Jn(\Jn)^*=(\check{H}^n_1)^{-1}$, where $\check{H}^n_1$ is the operator associated with $\check{\calE}^n_1$.
  As Mosco-convergence for forms is equivalent to strong resolvent convergence for the associated operators (see \cite[Theorem 2.4.1]{Mosco1994}) and for bounded self-adjoint operators strong convergence and resolvent convergence are equivalent
  we are led to prove that $(Q_n)$ Mosco-converges to $Q_\infty$.

  To prove (M1), let $(\psi_n)$ be a $L^2(X,\mu)$-weakly convergent sequence with  weak limit $\psi\in L^2(X,\mu)$. W.l.o.g.\ we may assume that $\liminf_{n\to\infty} Q_n[\psi_n] = \lim_{n\to\infty} Q_n[\psi_n]$ (otherwise choose a suitable subsequence).
  First, note that $(\psi_n)$ is bounded. By \eqref{eq:A1} and \eqref{eq:A2} we easily obtain that $\sup_{n\in\N\cup\{\infty\}}\|(\Jn)^*\| = \sup_{n\in\N\cup\{\infty\}}\|\Jn\|<\infty$. Thus,
  \[\sup_{n\in\N} Q_n[\psi_n] = \sup_{n\in\N} \calEn_1[(\Jn)^*\psi_n] \leq \sup_{n\in\N} \|(\Jn)^*\|^2 \|\psi_n\|^2 < \infty.\]
  In particular,
  \begin{align}
    \sup_{n\in\N} \calEn_1[(\Jn)^*\psi_n]<\infty.
  \label{En-bounded}
  \end{align}
  For the rest of the proof we shall use Kuwae's method (see \cite[Ssection 2.2]{Kuwae}) as follows:
  For $n,m\in\N\cup\{\infty\}$ define
  \[
  E_n:=({\mathcal{D}},(\calEn_1)^{1/2}),\quad  {\mathcal{C}}_n:=\ran \Kn,\quad
  \Phi_{m,n}\from {\mathcal{C}}_m \to E_n,\quad
    \Phi_{m,n}u:=\Kn(K^{m})^{-1}u.
  \]
  Then $\Phi_{m,m}u = u$ for all $u\in \mathcal{C}_m$.
  Furthermore, as Mosco-convergence of forms is equivalent to strong resolvent convergence of the associated operators, for $v\in \mathcal{C}_\infty = \ran K^\infty$ and $u\in L^2(X,m)$ such that $K^\infty u = v$ we get
  \begin{equation}
  \label{eq:calEn_[Kn_u]_convergence}
    \calEn_1[\Phi_{\infty,n}v] = \calEn_1[\Kn u]=\int_X u \Kn u\,dm\to \int_X u K^\infty u\,dm = \calE_1^{\infty}[K^\infty u] = \calE_1^\infty[v].
  \end{equation}
  Hence, $(E_n)_{n\in\N}$ converges to $E_\infty$ in the sense of Kuwae and assumption \cite[Assumption A.2.1]{Kuwae} is fulfilled.

  Let $u\in L^2(X,m)$. By \eqref{eq:A3} we have $\Jn\Kn u \to J_1^\infty K^\infty u$ in $L^2(X,\mu)$.
  For $n\in\N$ define $w_n:= K^\infty u\in \ran K^\infty$. Then clearly $w_n\to K^\infty u$ in $E_\infty$, and
  \[\lim_{k\to\infty} \limsup_{n\to\infty}  \calEn_1[\Phi_{\infty,n} w_k - \Kn u] = \lim_{k\to\infty} \limsup_{n\to\infty} \calEn_1[\Kn u -\Kn u] = 0.\]
  Hence, $\Kn u \to K^\infty u$ strongly in the sense of Kuwae (see \cite[Definition 2.4]{Kuwae}).


  By \eqref{En-bounded}, an application of \cite[Lemma 2.2]{Kuwae} yields that there exists a subsequence $\bigl((J^{n_k}_1)^*\psi_{n_k}\bigr)_k$ and $u_\infty\in E_\infty$ such that for all $u\in L^2(X,m)$ we have
  \[\lim_{k\to\infty} \calE^{n_k}_1\bigl((J^{n_k}_1)^*\psi_{n_k}, K^{n_k} u\bigr) = \calE_1^\infty(u_\infty,K^{\infty} u)\]
  (weak convergence in the sense of Kuwae \cite[Definition 2.5]{Kuwae}).
  Since $(\psi_n)$ is weakly convergent to $\psi$ and $(\Jn \Kn u)$ is strongly convergent to $J_1^\infty K^\infty u$ we also obtain
  \[\calE^{n_k}_1\bigl((J^{n_k}_1)^*\psi_{n_k}, K^{n_k} u\bigr) = \int \psi_{n_k} J^{n_k}_1 K^{n_k} u \,d\mu \to \int \psi J^\infty_1 K^\infty u\,d\mu = \calE^\infty_1\bigl((J^\infty_1)^* \psi, K^\infty u\bigr).\]
  Thus, $\calE_1^\infty(u_\infty,K^{\infty} u) = \calE^\infty_1\bigl((J^\infty_1)^* \psi, K^\infty u\bigr)$ for all $u\in L^2(X,m)$. Since $\ran K^\infty = \dom H^\infty$ is a core for $\calE^\infty_1$, we conclude
  $u_\infty = (J^\infty_1)^*\psi$.

  From \cite[Lemma 2.3]{Kuwae} we then get
  \begin{align*}
    \liminf_{n\to\infty}Q_{n}[\psi_{n}] & = \lim_{n\to\infty}Q_{n}[\psi_{n}] = \liminf_{k\to\infty}Q_{n_k}[\psi_{n_k}] = \liminf_{k\to\infty}\calE^{n_k}_1[(J^{n_k}_1)^*\psi_{n_k}] \\
    & \geq \calE_1^\infty[(J^\infty_1)^*\psi] = Q_\infty[\psi],
  \end{align*}
  and (M1) is proved.

  To prove (M2), let $\psi\in L^2(X,\mu)$. We will use $\psi_n:=\psi$ for all $n\in\N$.
  Without loss of generality, we may assume that $\limsup_{n\to\infty} Q_n[\psi] = \lim_{n\to\infty} Q_n[\psi]$ (otherwise choose a suitable subsequence).
  By \eqref{En-bounded} and \eqref{eq:A1} the sequence $\bigl((J^n_1)^*\psi\bigr)_n$ is bounded with respect to $\calE^\infty_1$.
  By choosing a suitable subsequence, we may assume that $\bigl((J^n_1)^*\psi\bigr)_n$ converges weakly to some $u_\infty\in\calD$ with respect to $\calE^\infty_1$.
  Thus,
  \[\int_X u (J^{n}_1)^*\psi\,dm = \calE^\infty_1\bigl(K^\infty u,(J^{n}_1)^*\psi\bigr) \to \calE^\infty_1(K^\infty u,u_\infty) = \int_X u u_\infty\,dm\]
  for all $u\in L^2(X,m)$.
  By \eqref{En-bounded} and reasoning as in the proof of (M1) the sequence $\bigl((J^n_1)^*\psi\bigr)_n$ has a subsequence $\bigl((J^{n_k}_1)^*\psi\bigr)_k$ which converges weakly in the sense of Kuwae to some $u_\infty'\in E_\infty$.
  In particular,
  \[\int_X u (J^{n_k}_1)^* \psi\,dm = \calE^{n_k}_1\bigl(K^{n_k} u, (J^{n_k}_1)^* \psi\bigr) \to \calE^{\infty}_1(K^\infty u, u_\infty') = \int_X u u_\infty'\,dm\]
  for all $u\in L^2(X,m)$. Thus, $u_\infty=u_\infty'$.
  Since also
  \[\int_X u (J^{n_k}_1)^* \psi\,dm = \calE^{n_k}_1\bigl(K^{n_k} u, (J^{n_k}_1)^* \psi\bigr) \to \calE^{\infty}_1\bigl(K^{\infty} u, (J^{\infty}_1)^* \psi\bigr) = \int_X u (J^{\infty}_1)^* \psi\,dm\]
  for all $u\in L^2(X,m)$ as in the proof of (M1), we obtain $u_\infty = (J^{\infty}_1)^* \psi$.
  Since $J^\infty_1$ is linear and continuous, it is also weakly continuous. Hence, $J^{n_k}_1 (J^{n_k}_1)^*\psi = J^\infty_1 (J^{n_k}_1)^*\psi \to J^\infty_1 (J^\infty_1)^*\psi$ weakly in $L^2(X,\mu)$.
  Thus,
  \begin{align*}
    \limsup_{n\to\infty} Q_n[\psi] & = \lim_{n\to\infty} Q_n[\psi] = \lim_{k\to\infty} Q_{n_k}[\psi] = \lim_{k\to\infty} \calE^{n_k}_1[(J^{n_k}_1)^*\psi] \\
    & = \lim_{k\to\infty} \int \psi J^{n_k}_1 (J^{n_k}_1)^*\psi \,d\mu = \int \psi J^{\infty}_1 (J^{\infty}_1)^*\psi \,d\mu = \calE^\infty_1[(J^\infty_1)^*\psi] = Q_\infty[\psi].
  \end{align*}

  (b)
  According to \cite[Theorem 3.36]{Attouch},
  the topology of Mosco-convergence on the space of closed forms on a Hilbert space is metrizable.
  Thus (b) is simply a consequence of a diagonal procedure.
\end{proof}

\begin{rk}
\label{FinalRemark}
  {\rm(a)}
      For \eqref{eq:A1} it suffices to require that there exists $c>0$ such that
      \[c^{-1}\calE[u]\leq \calEn[u]\leq c\calE[u] \quad\text{for all } u\in\calD, n\in\N.\]
      Then Mosco-convergence of $(\calEn)$ to a Dirichlet form $\calE^\infty$ yields \eqref{eq:A1}.
      
  {\rm(b)}
      Note that in Theorem \ref{convtrace}, compared to \cite[Theorem 3.7]{Arendt1}, we just require Mosco convergence of $(\calE^n)$ to $\calE^\infty$.
\end{rk}

The following lemma can be used to obtain \eqref{eq:A3}.

\begin{lem}
  Assume \eqref{eq:A1} and \eqref{eq:A2}. Let $(\calEn)$ be Mosco-convergent to $\calE^\infty$ and assume that
  $\calEn[v]\to \calE^\infty[v]$ for all $v\in \ran K^\infty$. Then \eqref{eq:A3} is satisfied.
\end{lem}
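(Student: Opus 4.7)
The plan is to upgrade $L^2(X,m)$-strong resolvent convergence to convergence in the $\calE^\infty_1$-norm, and then to transfer that into $L^2(X,\mu)$ via the continuity of $J^\infty_1$. A preliminary reduction will simplify the bookkeeping: by \eqref{eq:A1} the norms $(\calEn_1)^{1/2}$ are mutually equivalent on $\calD$, so the continuous extensions to $\calD$ of the common embedding $\calD\cap L^2(X,\mu)\hookrightarrow L^2(X,\mu)$ must all coincide. Consequently $\Jn w = J^\infty_1 w$ for every $w\in\calD$, and \eqref{eq:A3} amounts to showing $J^\infty_1(\Kn u - K^\infty u)\to 0$ in $L^2(X,\mu)$ for each $u\in L^2(X,m)$.

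Fix $u\in L^2(X,m)$ and set $v:=K^\infty u\in\ran K^\infty$. Since Mosco convergence is equivalent to strong resolvent convergence of the associated operators (\cite[Theorem 2.4.1]{Mosco1994}), $\Kn u\to v$ in $L^2(X,m)$. The resolvent identity reads $\calEn_1(\Kn u, w)=(u,w)$ for all $w\in\calD$, and analogously $\calE^\infty_1(v,w)=(u,w)$. Specializing $w=\Kn u$ and $w=v$ gives
\[
\calEn_1[\Kn u] = (u,\Kn u)\to (u,v) = \calE^\infty_1[v], \qquad \calEn_1(\Kn u,v) = (u,v) = \calE^\infty_1[v].
\]
Combined with the standing hypothesis $\calEn[v]\to\calE^\infty[v]$ (which, together with the trivial limit for $\|v\|^2$, yields $\calEn_1[v]\to\calE^\infty_1[v]$), expanding the square
\[
\calEn_1[\Kn u - v] = \calEn_1[\Kn u] - 2\calEn_1(\Kn u,v) + \calEn_1[v]
\]
and passing to the limit shows that $\calEn_1[\Kn u - v]\to 0$.

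Finally, one more application of \eqref{eq:A1} gives $\calE^\infty_1[\Kn u - v]\leq c\,\calEn_1[\Kn u - v]\to 0$, and the continuity of $J^\infty_1$ on $(\calD,\calE^\infty_1)$ (which is \eqref{eq:A2} up to the norm equivalence \eqref{eq:A1}) yields $\|J^\infty_1(\Kn u - v)\|_{L^2(X,\mu)}\to 0$, i.e.\ \eqref{eq:A3}. The only mildly subtle point in the plan is the preliminary identification $\Jn=J^\infty_1$ on $\calD$, which is what legitimizes comparing quantities a priori attached to different Hilbert-space structures; the remainder is a routine resolvent-identity computation in which the two hypotheses play complementary roles --- pointwise convergence $\calEn[v]\to\calE^\infty[v]$ on $\ran K^\infty$ supplies the limit of $\calEn_1[v]$, while Mosco convergence supplies the limit of $(u,\Kn u)$.
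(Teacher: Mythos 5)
Your proof is correct and takes essentially the same route as the paper's: expand $\calEn_1[\Kn u - K^\infty u]$, evaluate each term via the resolvent identity $\calEn_1(\Kn u,\cdot)=(u,\cdot)$ together with the strong resolvent convergence $\Kn u\to K^\infty u$ coming from Mosco convergence, use the hypothesis for the term $\calEn_1[K^\infty u]$, and then transfer the resulting $\calEn_1$-convergence to $\calE^\infty_1$ via \eqref{eq:A1} and finally to $L^2(X,\mu)$ via the continuity of $J^\infty_1$. Your preliminary remark that the $\Jn$ all coincide as maps on $\calD$ is exactly what the paper invokes implicitly when it writes $J_1^n K^n u = J_1^\infty K^n u$ in the last line; the remainder is the same computation.
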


\begin{proof}
  We make use of the notation introduced in the proof of Theorem \ref{convtrace}.
  Let $u\in L^2(X,m)$. 
  Since $\Kn u \to K^\infty u$ in $L^2(X,m)$ (cf.\ Remark \ref{rem:construction}(b)), as in \eqref{eq:calEn_[Kn_u]_convergence} we obtain
  \begin{align*}
    \calEn_1[\Kn u - K^\infty u] & = \calEn_1[\Kn u] - 2\calEn_1(\Kn u, K^\infty u) + \calEn_1[K^\infty u]\\
    & = \calEn_1[\Kn u] - 2 \int_X u K^\infty u\,dm + \calEn_1[K^\infty u]\\
    & \to \calE^\infty_1[K^\infty u] - 2 \int_X u K^\infty u\,dm + \calE^\infty_1[K^\infty u]\\
    & = \calE^\infty_1[K^\infty u] - 2\calE^\infty_1[K^\infty u] + \calE^\infty_1[K^\infty u] = 0.
  \end{align*}
  By \eqref{eq:A1} we conclude
  \[\calE^\infty_1[\Kn u - K^\infty u] \to 0.\]
  Since $J^\infty_1$ is continuous, we have $J_1^nK^n u = J_1^\infty K^n u \to J_1^\infty K^\infty u$ in $L^2(X,\mu)$.
\end{proof}

The following counter-example shows that if \eqref{eq:A1} fails (whereas \eqref{eq:A2} still hold true) then the conclusions of Theorem \ref{convtrace} may fail!

\begin{exa}
  Let $X:=[0,1]$, $m$ the Lebesgue measure on $X$, $\calH:=L^2(X,m) = L^2(0,1)$, $\calE$ the classical Dirichlet form with Neumann boundary conditions, i.e.
  \[
    \calD := H^1(0,1),\quad \calE[u] := \int_0^1 u'(x)^2\,dx,
  \]
  and $\mu := \delta_0+\delta_1$. For $n\in\N$ define $\calEn$ in $\calH$ by
  \[
    \calEn[u] := \frac{1}{n}\int_0^1 u'(x)^2\,dx + u(0)^2 + u(1)^2 \quad\text{for all } u\in\calD.
  \]
  We shall identify the space $L^2(X,\mu)$ with the Euclidean space $\R^2$.
  Clearly, $\calE$ is a regular Dirichlet form. By Sobolev embedding, elements from $\calD$ have continuous representatives. Moreover  we see that $J$ is densely defined with dense range, $J_1$ is bounded and the $\calEn$'s are closed. This is indeed all we need.
  Furthermore, for every $n\in\N$ and $u\in\calD$ we have
  \[
    \calEn[(0\vee u)\wedge 1]\leq \calEn[u].
  \]
  Hence the $\calEn$'s are Dirichlet forms. However, assumption \eqref{eq:A1} is not fulfilled in this particular case. Indeed, for $u\in H^1_0(0,1)\subseteq \calD$ and $n\in\N$ we observe $\calEn[u] = \frac{1}{n}\calE[u]$.

  By \cite[Theorem 3.2]{Simon} the sequence $(\calEn)$ converges in the sense of Mosco to the closure of the regular part of the quadratic form $q$ defined by
  \[
    \dom q:=\calD,\quad  q[u] := u(0)^2 + u(1)^2.
  \]
  However, it is well-known that $q = q_{\rm sing}$ (compare \cite[Example (3)]{Simon}; but it is also easy to see) and hence $\calEn\to 0$ in the sense of Mosco.

  Obviously, in this situation we have $\check{\calE}^\infty =0$.
  Furthermore, for every $\lambda>0$, the $\calE^\infty_\lambda$-orthogonal complement of $\ker J$ is $\{0\}$.
  Hence, $\check{\calE}_\lambda^\infty=0$ for all $\lambda\geq 0$.

  We shall show that $\check{\calE}^n_\lambda\not\to 0$ in the sense of Mosco, for any $\lambda\geq 0$.
  Let us first compute $\check{\calE}^n_\lambda$ for $\lambda\geq 0$. To this end, for given $a,b\in\R$, we solve the boundary value problem
  \begin{align*}
    -\frac{1}{n}u'' + \lambda u & =0 \quad \text{in } (0,1),\\
    u(0) & =a,\quad u(1) = b.
  \end{align*}
  For $\lambda>0$, the solution is given by
  \[
    u_{a,b}(x) = au_1(x) + bu_2(x)  \quad\text{for all } x\in [0,1],
  \]
  where
  \[
  u_1(x) := \frac{\sinh (\sqrt{n\lambda}(1-x))}{\sinh\sqrt {n\lambda}}, \quad
  u_2(x) := \frac{\sinh (\sqrt{n\lambda}x)}{\sinh\sqrt {n\lambda}}  \quad\text{for all } x\in[0,1].
  \]
  From the definition of $\check{\calE}^n_\lambda$ we get
  \begin{align*}
    \check{\calE}^n_\lambda[(a,b)] & = \calEn_\lambda[u_{a,b}] = \frac{1}{n}u_{a,b}(1)u_{a,b}'(1)-\frac{1}{n}u_{a,b}(0)u_{a,b}'(0) + a^2 + b^2 \\
    & = -\frac{\sqrt{n\lambda}}{\sinh(\sqrt{n\lambda})}\frac{2ab}{n} + \frac{(a^2+b^2)\sqrt{n\lambda}}{n}\frac{\cosh(\sqrt{n\lambda})}{\sinh(\sqrt{n\lambda})} + a^2 + b^2.
  \end{align*}
  Moreover, for $\lambda=0$ an elementary computation yields
  \begin{align*}
    \check{\calE}^n[(a,b)] = \frac{1}{n} (b-a)^2 + a^2 + b^2 \quad\text{for all } a,b\in\R.
  \end{align*}
  Therefore, for all $\lambda\geq 0$ we obtain
  \[
    \lim_{n\to\infty}\check{\calE}^n_\lambda[(a,b)]=a^2+b^2.
  \]
  Since the limit form is bounded on $\R^2$, by \cite[Theorem 3.2]{Simon} we conclude that for each $\lambda\geq 0$ the sequence $(\check{\calE}^n_\lambda)$ converges in the sense of Mosco to the Euclidean scalar product on $\R^2$.
\end{exa}

\section*{Acknowledgement}

We thank the referee for many valuable comments which improved the manuscript. In particular, they led to clarifications yielding Theorem \ref{thm:constructions_agree} and a strengthening of Lemma \ref{lem:calH_har_closed}.

\bibliography{BiblioConv}

\end{document}